\documentclass[12pt,reqno]{amsart}
\usepackage[letterpaper,top=2cm,bottom=2cm,left=3cm,right=3cm,marginparwidth=1.75cm]{geometry}
\usepackage{amsthm,amsmath,amsfonts,amssymb}
\usepackage{hyperref}
\usepackage{color}
\usepackage{graphicx}
\usepackage{color}
\usepackage{lipsum}% http://ctan.org/pkg/lipsum
%\makeatletter
%\g@addto@macro{\endabstract}{\@setabstract}
%\newcommand{\authorfootnotes}{\renewcommand\thefootnote{\@fnsymbol\c@footnote}}%
%\makeatother

%%%%%%%%%%%%%%%%%%%%%%%%%%%%%%%%%%%
\setlength{\topmargin}{-0.0in}
\setlength{\textwidth}{6.5in}
\setlength{\textheight}{8.5in}
\setlength{\oddsidemargin}{+0.1cm}
\setlength{\evensidemargin}{+0.1cm}
%%%%%%%%%%%%%%%%%%%%%%%%%%%%%%%%%%%

\newtheorem{theorem}{Theorem}

\newtheorem{algorithm}[theorem]{Algorithm}

\newtheorem{corollary}[theorem]{Corollary}

\newtheorem{definition}[theorem]{Definition}
\newtheorem{example}[theorem]{Example}

\newtheorem{lemma}[theorem]{Lemma}

\newtheorem{Question}[theorem]{Question}

\newtheorem{remark}[theorem]{Remark}

\linespread{1.2}

\def\rr{\mathbb{R}}

\def\rr{\mathbb{R}}

\newcommand{\R}{\mathbb{R}}

\begin{document}

\title[Structural stability of invasion graphs]{Structural stability of invasion graphs \\ for Lotka--Volterra systems}

\date{}

\author[Almaraz]{Pablo Almaraz$^{1,2}$}
\address{$^1$ Departamento de Ecuaciones Diferenciales y An\'{a}lisis Num\'{e}rico, Universidad de Sevilla, Campus Reina Mercedes, 41012, Sevilla, Spain.}
\address{$^2$Grupo de Oceanografía de Ecosistemas, Instituto de Ciencias Marinas de Andalucía (ICMAN-CSIC), Campus Universitario de Puerto Real, Puerto Real, 11519, Spain.}
\thanks{}

\author[Kalita]{Piotr Kalita$^{1,3}$}
\address{$^3$ Faculty of Mathematics and Computer Science, Jagiellonian University, ul. \L{}ojasiewicza 6, 30-348 Krak\'{o}w, Poland.}
\thanks{}

\author[Langa]{Jos\'e A. Langa$^1$}
\thanks{}

\author[Soler--Toscano]{Fernando Soler--Toscano$^4$}
\address{$^4$ Departamento de Filosofía, Lógica y Filosofía de la Ciencia, Universidad de Sevilla, C/ Camillo Jos\'{e} Cela, s/n, 41018, Sevilla, Spain.}
\thanks{}

\email[]{pablo.almaraz@csic.es} 
\email[]{piotr.kalita@ii.uj.edu.pl}
\email[]{langa@us.es}
\email[]{fsoler@us.es}

\begin{abstract}
In this paper, we study in detail the structure of the global attractor for the Lotka--Volterra system with a Volterra--Lyapunov stable structural matrix. We consider the invasion graph as recently introduced in \cite{Hofbauer} and prove that its edges represent all the heteroclinic connections between the equilibria of the system. We also study the stability of this structure with respect to the perturbation of the problem parameters. This allows us to introduce a definition of structural stability in ecology in coherence with the classical mathematical concept where there exists a detailed geometrical structure, robust under perturbation, that governs the transient and asymptotic dynamics.
\end{abstract}

\maketitle

\section{Introduction. Invasion graphs and ecological assembly}

The relations between populations of interacting species in ecosystems can be described by structured networks, where nodes represent species, and the edges represent the fact that the presence of one species affects another one \cite{Bascompte2014}. In order to understand the behavior of the ecosystem, however, it is necessary to study the dynamics of the interactions between species, i.e. how their quantities vary in time in relation to each other. The classical study of ecological dynamical models has been focused in their asymptotic behavior \cite{May1973}, but what is actually observed during the evolution in time of real systems is the presence of transient states \cite{Hastings2018}. These transient states are known to last for hundreds of generations in many natural systems in which stochasticity is an integral part of their dynamics \cite{Hastings2021}, so a major goal in current theoretical ecology is to evaluate the impact of transient dynamics on the persistence of communities in a constantly changing environment \cite{Hastings2018}. The interplay between transient and asymptotic dynamics is particularly important when we want to analyze the way in which communities assemble, or the invasion of one or several species to a given state of the ecosystem. The description of the sequence of both the potential assemblies or invasions (bottom-up), and the disassemblies or extinctions (top-down) is usually described by a network structure whose nodes are subcommunities and edges represent the possibility of evolution from one subcommunity to another \cite{Hang1993}. Full knowledge of such structure allows us to draw a complete landscape of all possible states in all possible times for the associated ecosystem. As it encompasses the essential information on the ecosystem, following our earlier terminology, we call it the informational structure (IS), cf. \cite{esteban, portillo}. The IS is the key object to investigate for a deeper understanding of the dynamics of the system, as it encapsulates both the transient states and the asymptotic dynamics. The complete characterization of the IS gives the information on the mechanics of ecological assembly. Indeed, given the close connection between the IS and the concept of assembly or community transition graph traditionally used in ecology \cite{Hang1993, Morton1996, servan}, the IS gives a picture of the pattern of possible developments of the ecological community containing the species present in the ecosystem.

If a model is a dissipative system of autonomous Ordinary Differential Equations which has a finite number of equilibria, then the underlying IS is contained in the structure of its global attractor. The nodes of IS correspond to the equilibria of the system and the edges represent the heteroclinic connections between them. In this paper, we focus on the Lotka--Volterra system of ODEs. While we choose this relatively simple model, there may exist many other factors affecting the evolution of an ecosystem, so that the modelling approach can include a variety of functional forms, from very basic ones to highly nonlinear vector fields including stochastic delays, or even higher order terms. The system under consideration here has the form
$$
u_i' = u_i\left(b_i+\sum_{j=1}^n a_{ij}u_j\right)\ \ \textrm{for}\ \ i\in \{ 1,\ldots,n\}.
$$
where $u_i$ is the state variable for species $i$ (e.g., population density or number of individuals); $b_i$ is the intrinsic growth rate for species $i$; and $a_{ij}$ is the direct effect of the average species $j$ individual on species $i$’s population growth rate \cite{novak}. We assume that the matrix $A = (a_{ij})_{i,j=1}^n$ is Volterra--Lyapunov stable (see Definition \ref{def1}).

For such a system, based on recent discoveries by Hofbauer and Schreiber \cite{Hofbauer}, we present an algorithm to construct the graph that represents the connections between the equilibria of the system, the IS, and we show that it is equivalent to the Invasion Graph (IG) as proposed in \cite{Hofbauer}. Thus, we complement the results of \cite{Hofbauer} which states, in a more general framework, that the graph of connections is a subgraph of the IG, but the possibility that IG is essentially bigger is generally not excluded. We show that for a particular case of Lotka--Volterra system with Volterra--Lyapunov stable matrix the two structures coincide. In this way, we give a joint framework for the study of ecological assembly \cite{servan}, Invasion Graphs \cite{Hofbauer} and Information Structures \cite{esteban, portillo}.

We stress that our argument works only in the Volterra--Lyapunov stable case where the IG (and equivalently IS) is the directed  graph and the results of \cite{Takeuchi} allow to construct the unique minimal element, the globally asymptotically stable stationary point (GASS). While this assumption may be restrictive, the advantage is that we explicitly describe the structure of all connections between the equilibria. In the general case, the Lotka--Volterra systems may encompass many rich dynamic phenomena, such as limit cycles \cite{Afraimovich2008}, but the analytical algorithm to construct the whole dynamics for a general $n$-dimensional system is still unknown. 

The problem of structural stability is of a fundamental importance in biology: it concerns the question of whether the state of a system and its stability will survive upon a small perturbation of model parameters. Recently, Rohr et al. \cite{rohr} represented the structural stability of ecological networks as a problem of community persistence. Essentially, the aim is to provide a measure of the range of admissible perturbations to a system under which no interacting species become extinct, i.e. the community is feasible. Feasibility refers here to the existence of a saturated equilibrium vector, that is, given a particular combination of species interaction parameters and intrinsic growth rates ($a_{ij}$ and $b_i$ in \eqref{lv}, respectively) all of the abundances are strictly positive at the equilibrium. Thus, there is a connection between structural stability, as it is currently used in ecology, and the Modern Coexistence Theory (MCT) \cite{barabas}, which aims at determining the number of species that can coexist in an ecosystem \cite{barabas}. Invasion Graphs, as introduced by \cite{Hofbauer}, extends the concept of assembly graphs to the invasibility criteria of the MCT: the condition that a set of persisting interacting species should have positive per-capita growths rates when rare \cite{chesson, barabas}. A novel contribution of our paper is to provide a link between Information Structures and Invasion Graphs through a measure of structural stability of global attractors that integrate both the transient and asymptotic dynamics. This achievement can be of paramount importance for a more detailed understanding of community coexistence and functioning in variable environments.

Inspired by these considerations, and by the study on the stability of the global attractor structure \cite{Bortolan}, we show that, not only the stable equilibrium but also the whole assembly remains unchanged upon a small perturbation of model parameters. This result is interesting from a mathematical point of view as we get a result on structural stability for a problem which is not necessarily Morse--Smale, contrary to many classical structural stability results (see \cite{Bortolan} and references therein). On the other hand, its interest from the point of view of ecology is that it links the concept of structural stability from \cite{rohr} with ecological assembly \cite{servan} and invasion dynamics \cite{Hofbauer}. Indeed, the notion of stability of all the assembly can be viewed as the refinement of the notion of the stability of the persistent equilibrium (see \cite{rohr}), as it induces the decomposition of the stability cones for the latter case into the smaller cones of the stability of assemblies. 

The structure of the paper is as follows: in Section \ref{sec:2} we formulate the problem and summarize its basic properties; in particular, we recall the result of \cite{Takeuchi} on the existence and characterization of a globally asymptotically stable steady state. The next Section \ref{sec:3} is devoted to local properties of the system: we explicitly linearize it around the equilibria and study the properties of this linearization. The first main result of the paper, which states that the IS coincides with the IG is contained in Section \ref{sec:4}. The following Section \ref{sec:5} contains the second main result, on the problem of structural stability, and on the stability cones for the assembly. Finally, in the appendices, we show that the considered problem is not necessarily Morse--Smale, and we formulate the open questions for the cases which are not  Volterra--Lyapunov stable.

\section{Lotka--Volterra systems and their global attractors.}\label{sec:2}
In this section we introduce the Lotka--Volterra systems and, for Volterra--Lyapunov stable matrices in the governing equation, we formulate the results on the underlying dynamics. The key concept is the global attractor. This attractor contains the minimal invariant sets (in our case, the equilibria) and the complete trajectories joining them in a hierarchical way. In our case, each admissible equilibrium or stationary point describes a subcommunity of the system. If this admissible equilibrium has strictly positive components, it is also feasible. Equilibria are joined by complete trajectories, i.e., global solutions of the system defined for all $t\in \R.$  This structure encodes all possible stationary states of the system and the underlying backward and forward behavior of the dynamics via the heteroclinic connections. It is a directed graph, which has been defined as an information structure in \cite{esteban,  kalita, portillo}, and it induces a landscape of the phase space defined as an informational field \cite{kalita}.

We start from definitions of classes of stable matrices. More information on them, as well as on the underlying dynamics of the associated Lotka--Volterra systems can be found in \cite{Hofbauer_Sigmund_1988, Hofbauer_Sigmund_1998, Logofet_1993, Takeuchi}.

\begin{definition}
	A real matrix $A\in \mathbb{R}^{n\times n}$ is stable if $\sigma(A)\subset \{ \lambda \in \mathbb{C}\, :\ \textrm{Re}\, \lambda < 0 \}$, where $\sigma(A)$ is the spectrum of $A$. 
\end{definition}

\begin{definition}
	A real matrix $A\in \mathbb{R}^{n\times n}$ is D-stable if for every matrix $D = \textrm{diag}\{ d_1,\ldots, d_n\}$  with $d_i>0$ for every $i$ the matrix $DA$ is stable. 
\end{definition}

\begin{definition}\label{def1}
	A real matrix $A\in \mathbb{R}^{n\times n}$ is Volterra--Lyapunov  stable (VL-stable) if there exists a matrix $H = \textrm{diag}\{ h_1,\ldots, h_n\}$ with $h_i>0$ such that $HA+A^TH$ is negative definite (i.e. stable).
\end{definition}

Consider the following Lotka--Volterra system with Volterra--Lyapunov stable matrix $A = (a_{ij})_{i,j=1}^n$ and a vector $b \in \rr^n$.
\begin{equation}\label{lv}
u_i' = F_i(u) = u_i\left(b_i+\sum_{j=1}^n a_{ij}u_j\right)\ \ \textrm{for}\ \ i\in \{ 1,\ldots,n\}.
\end{equation}

%\subsection{Equilibria and global attractor}
Let $n \in \mathbb{N}$. We denote
$$
\overline{C}_+ = \{ x = (x_1,\ldots,x_n)\in \mathbb{R}^n\,:\ x_i \geq  0 \ \ \textrm{for}\ \ i\in \{1,\ldots,n\}\},
$$
and 
$$
C_+ = \textrm{int}\ \overline{C}_+ = \{ x = (x_1,\ldots,x_n)\in \mathbb{R}^n\,:\ x_i >  0 \ \ \textrm{for}\ \ i\in \{1,\ldots,n\}\}.
$$
Now let $x = (x_1,\ldots,x_n) \in \overline{C}_+$. If $J\subset \{ 1,\ldots,n\}$ is a set of indices then we will use a notation
$$
C_+^J = \{ x\in \overline{C}_+\ :\  x_i > 0\ \ \textrm{for}\ \ i\in J  \}. 
$$
If $x\in \overline{C}_+$, then we denote $J(x) = \{ i\in \{1,\ldots,n\}\,:\ x_i > 0 \}$. Having such $x\in \overline{C}_+$, we have
$$
C_+^{J(x)} = \{ y\in \overline{C}_+\ :\  y_i > 0\ \ \textrm{for}\ \ i\in J(x)  \}.
$$

We present a result on the  system \eqref{lv} from \cite{Takeuchi}. We will be first interested in its equilibria in $\overline{C}_+$. Clearly $0=(0,\ldots,0)\in \rr^n$ is one of them. If we choose the nonempty subset of indices $J \subset \{1,\ldots,n\}$, say $J = \{ i_1,\ldots, i_m\}$, then we will say that this set defines an admissible equilibrium if there exists a point $x \in C^J_+$ with $x_i = 0$ for $i\not\in J$ which is an equilibrium of \eqref{lv}. The statement will be made more precise with some auxiliary notation introduced with the next definition.

\begin{definition}
	Let $A = (a_{ij})_{i,j=1}^n \in \rr^{n\times n}$ be a matrix. If $m<n$, then $m\times m$ principal submatrix of $A$ is obtained by removing any $n-m$ columns and $n-m$ rows with the same indices from $A$, i.e. if $1 \leq  i_1 < i_2 < \ldots < i_m \leq n$. The principal submatrix of $A$ associated with the set $J =\{i_1,\ldots,i_m\}$ has a form $A(J) = (A(J)_{jk})_{j,k=1}^m = (a_{i_ji_k})_{j,k=1}^m$. 
\end{definition} 

Also, for a vector $b\in \rr^n$ we can associate with a set of indices $J=\{i_1,\ldots,i_m\}$ its subvector $b(J) = (b_{i_j})_{j=1}^m$. So, the set $J$ defines a feasible equilibrium of the subsystem consisting only of the equations indexed by elements of $J$ and taking the variables outside $J$ as zero, if the solution of the system $A(J) v = -b(J)$ has all coordinates strictly positive. We denote this solution by $u^*(J)$. 
The associated admissible equilibrium of the original $n$-dimensional system is given by $u_i = 0$ for $i\not\in J$, and $u_{i_j} = v_j$ for $j\in \{1,\ldots,m\}$, i.e., $i_j\in J$. We use the notation 
$u^*=(u^*(J),0_{i\in \{1,\ldots,n\}\setminus J})$. 

Since every subset of $\{ 1,\ldots,n\}$ can potentially define an admissible equilibrium, there may be maximally $2^n$ of them (including zero), each of them determined uniquely by splitting $\{ 1,\ldots,n\}$ into the union of two disjoint subsets: the set $J$ on which the coordinates are strictly positive (this set defines the equilibrium) and the remainder on which they must be zero. 

It is not difficult to prove that for every subset of indices $J \subset \{1,\ldots, n\}$ the set
$$
\{ x\in \overline{C}_+\ :\ x_i = 0\ \ \textrm{for some}\ \ i\in J \} = \overline{C}_+\setminus C_+^J
$$
is positively and negatively invariant with respect to the flow defined by \eqref{lv}. 

We recall the definition of the Linear Complementarity Problem (LCP).
Given a matrix $B\in \mathbb{R}^{n\times n}$ and a vector $c\in \mathbb{R}^n$ the linear complementarity problem $LCP(B,c)$ consists in finding a vector $x\in \mathbb{R}^n$ such that
\begin{align*}
   	&	Bx+c\geq 0,\\
   	&	x\geq 0,\\
   	&	x^\top (Bx+c) = 0.
\end{align*}
If the matrix $A$ is Volterra--Lyapunov stable then the problem $LCP(-A,-b)$ has a unique solution for every $b\in \mathbb{R}^n$, cf. \cite[Lemma 3.2.1 and Lemma 3.2.2]{Takeuchi}.
%    This solution is a GASS of \eqref{lv}, cf Theorem \ref{thm:take}. Then the first inequality in the definition of $LCP$ corresponds to the fact that invasion rates for a GASS cannot be positive. The second one corresponds to the fact that the equilibrium is admissible, and the last one to the fact that GASS is an equilibrium. 
%    
    
    We cite the asymptotic stability result from \cite{Takeuchi}. 
    \begin{theorem}[\cite{Takeuchi}, Theorem 3.2.1]\label{thm:take}
    	If $A$ is Volterra--Lyapunov stable then for every $b\in \rr^n$ there exists a unique equilibrium $u^*\in \overline{C}_+$ of \eqref{lv} which is globally asymptotically stable in the sense that for every $u_0\in C_+^{J(u^*)}$ the solution $u(t)$ of \eqref{lv} with the initial data $u_0$ converges to $u^*$ as time tends to infinity. This $u^*$ is the unique solution of the linear complementarity problem $LCP(-A,-b)$. In particular, if the solution $\overline{u}$ of the system $A\overline{u}=-b$ is positive, then $u^* = \overline{u}$.   
    \end{theorem}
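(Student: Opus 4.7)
First, existence and uniqueness of the candidate $u^*$ is obtained from linear complementarity theory: VL-stability of $A$ forces $-A$ to be a $P$-matrix (a fact to be collected in the appendix on stable matrices), so that standard $P$-matrix LCP theory guarantees $LCP(-A,-b)$ admits a unique solution $u^*\in \overline C_+$ satisfying $-(Au^*+b)\in\overline C_+$ and $u_i^*(Au^*+b)_i=0$ for every $i$. Setting $J = J(u^*)$, complementarity reads $(Au^*+b)_i=0$ for $i\in J$ and $(Au^*+b)_i\le 0$ for $i\notin J$; in both cases $u_i^*(b_i+\sum_j a_{ij}u_j^*)=0$, so $u^*$ is an equilibrium of \eqref{lv}. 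The final claim that $\overline u$ with $A\overline u=-b$ is the LCP solution whenever $\overline u>0$ is then immediate from direct verification of all LCP conditions.

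The main step is to certify global asymptotic stability through the Volterra-type Lyapunov function
\begin{equation*}
V(u) = \sum_{i\in J} h_i\Big(u_i - u_i^* - u_i^*\ln\frac{u_i}{u_i^*}\Big) + \sum_{i\notin J} h_i u_i,
\end{equation*}
defined on the positively invariant stratum $C_+^J$, with $h_i>0$ supplied by the VL-stability of $A$. Convexity of $s\mapsto s-s^*-s^*\ln(s/s^*)$ gives $V\ge 0$ with equality only at $u^*$; the $-\ln u_i$ terms act as logarithmic barriers as any coordinate $u_i\to 0^+$ for $i\in J$, while the linear growth at infinity makes the sublevel sets $\{V\le c\}$ compact in $C_+^J$.

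Differentiating along \eqref{lv}, setting $w=u-u^*$, and using $(Au^*+b)_i=0$ for $i\in J$ together with $w_i=u_i$ for $i\notin J$, routine algebra should yield
\begin{equation*}
\frac{d}{dt}V(u(t)) = \tfrac{1}{2}w^\top(HA+A^\top H)w + \sum_{i\notin J} h_i u_i (Au^*+b)_i.
\end{equation*}
By VL-stability the quadratic form is non-positive and vanishes only if $w=0$; by complementarity every term in the remaining sum is non-positive ($h_i>0$, $u_i\ge 0$, $(Au^*+b)_i\le 0$). Thus $\dot V\le 0$ with $\dot V(u)=0$ forcing $u=u^*$. Combined with compactness of the sublevel sets, LaSalle's invariance principle then gives $u(t)\to u^*$ for every $u_0\in C_+^J$. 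Uniqueness of the GAS equilibrium in $\overline C_+$ follows because $C_+\subseteq C_+^{J(v)}$ for any equilibrium $v$, so two distinct GAS equilibria would share basin points in $C_+$, a contradiction.

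The principal technical obstacle I anticipate is organising the $\dot V$ computation so that the contributions from indices inside and outside $J$ combine cleanly into a single quadratic form in $w$ plus a manifestly non-positive residue controlled by the complementarity inequalities; once that identity is in hand, the remainder is a standard Lyapunov--LaSalle argument, and the algebraic input on $P$-matrices and LCP uniqueness is essentially a citation to the appendix.
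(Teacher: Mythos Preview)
The paper does not supply its own proof of this theorem: it is quoted verbatim as \cite[Theorem 3.2.1]{Takeuchi} and used as a black box throughout. Your proposal reconstructs the classical argument from Takeuchi's monograph, and it is correct. The derivative identity
\[
\dot V(u) = \tfrac12\,w^\top(HA+A^\top H)\,w \;+\; \sum_{i\notin J} h_i\,u_i\,(Au^*+b)_i
\]
is exactly what falls out once you split $(Au+b)_i = (Aw)_i + (Au^*+b)_i$ and use complementarity to kill the $i\in J$ contributions of the affine residue; the negative definiteness of $HA+A^\top H$ then forces $\dot V=0\Rightarrow w=0$, and LaSalle together with the compactness of sublevel sets (the $-u_i^*\ln u_i$ barriers for $i\in J$, linear growth for $i\notin J$) closes the argument. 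The $P$-matrix/LCP input you invoke is precisely what the paper records in Appendix~C via \cite[Lemmas 3.2.1--3.2.2]{Takeuchi}. In short: there is nothing to compare against in the paper itself, and your sketch matches the standard source proof.
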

We will denote this $u^*$ as GASS (globally asymptotically stable stationary point). The following result is a straightforward consequence of the previous theorem.
\begin{corollary}
If $u^*$ is a GASS for the problem governed by \eqref{lv}, then for every set $J = \{j_1,\ldots,j_k\} \subset \{ 1,\ldots, n\}$, such that $J(u^*) \subset J$ the point $y\in \mathbb{R}^{k}$ defined by $y_i = u^*_{j_i}$ for $i\in\{1,\ldots,k\}$ is a GASS for the $k$ dimensional problem with $A(J)$ and $b(J)$. 
\end{corollary}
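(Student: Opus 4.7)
The plan is to reduce the corollary to Theorem \ref{thm:take} applied to the subsystem indexed by $J$, and then identify the two candidate GASS via the uniqueness of the linear complementarity problem.

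First I would verify that the hypotheses of Theorem \ref{thm:take} apply to the reduced problem with coefficients $A(J)$ and $b(J)$. Since $A$ is Volterra--Lyapunov stable, there is a positive diagonal $H=\mathrm{diag}\{h_1,\ldots,h_n\}$ with $HA+A^TH$ negative definite; the diagonal character of $H$ means that $H(J)A(J)+A(J)^TH(J)$ is exactly the principal submatrix of $HA+A^TH$ indexed by $J$, and a principal submatrix of a negative definite symmetric matrix is negative definite. Hence $A(J)\in S_w$, and Theorem \ref{thm:take} gives a unique GASS $y^*\in\overline{C}_+\subset\rr^k$ for the subsystem, characterized as the unique solution of $LCP(-A(J),-b(J))$.

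Next I would check that the vector $y$ defined by $y_i=u^*_{j_i}$ is itself a solution of $LCP(-A(J),-b(J))$; then uniqueness forces $y=y^*$. Clearly $y\geq 0$. Because $J(u^*)\subset J$, for any $\ell\in\{1,\ldots,k\}$ and any $j\notin J$ we have $u^*_j=0$, so
\begin{equation*}
\sum_{m=1}^{k} a_{j_\ell j_m}\,y_m \;=\; \sum_{m=1}^{k} a_{j_\ell j_m}\,u^*_{j_m} \;=\; \sum_{j=1}^{n} a_{j_\ell j}\,u^*_j.
\end{equation*}
Using this identity, the inequality $-A(J)y-b(J)\geq 0$ and the complementarity relation $y_\ell\bigl(-\sum_m a_{j_\ell j_m}y_m - b_{j_\ell}\bigr)=0$ reduce exactly to the corresponding LCP conditions for $u^*$ at the index $j_\ell\in J$, which hold since $u^*$ solves $LCP(-A,-b)$.

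The main (and essentially only) subtlety is the bookkeeping in the previous paragraph: one must exploit $J(u^*)\subset J$ to suppress the contributions of coordinates outside $J$ in the sum $\sum_j a_{j_\ell j}u^*_j$, so that the $n$-dimensional LCP at coordinates indexed by $J$ becomes, verbatim, the $k$-dimensional LCP for the reduced system. Once this is in place, uniqueness of the LCP solution (equivalently, of the GASS) from Theorem \ref{thm:take} applied to $(A(J),b(J))$ finishes the proof.
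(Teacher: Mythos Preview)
Your argument is correct and is precisely the natural way to make rigorous what the paper leaves implicit: the paper gives no proof beyond calling the corollary ``a straightforward consequence of the previous theorem,'' and your reduction to Theorem \ref{thm:take} via the LCP characterization of the GASS is exactly how that consequence is realized. The only thing to add is that Lemma \ref{lem:minor} in Appendix C records the Volterra--Lyapunov stability of $A(J)$ you prove by hand, so you could simply cite it.
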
 
We present the definition of a global attractor \cite{ha}:
\begin{definition}
	Let $X$ be a metric space and let $S(t):X\to X$ be a semigroup of mappings parameterized by $t\geq 0$. The set  $\mathcal{A} \subset X$ is called a global attractor for $\{S(t)\}_{t\geq 0}$ if it is nonempty, compact, invariant (i.e. $S(t)\mathcal{A} = \mathcal{A}$ for every $t\geq 0$), and it attracts all bounded sets of $X$ (i.e. if $B\subset X$ is nonempty and bounded then $\lim_{t\to \infty}\mathrm{dist}(S(t)B,\mathcal{A}) = 0$, where $\mathrm{dist}(C,D)=\sup_{x\in C}\inf_{y\in D}d(x,y)$ is the Hausdorff semidistance between sets $C,D\subset X$).
\end{definition}
If the mappings $S(t):X\to X$ are continuous, for the global attractor existence we need two properties to hold: the dissipativity and asymptotic compactness \cite{Robinson}. As  a consequence of Theorem \ref{thm:take} we have the following result.
\begin{theorem}
	For every $u_0\in \overline{C}_+$ the problem governed by \eqref{lv} has a unique solution which is a continuous function of time, and the initial data. Moreover, assuming the Volterra--Lyapunov stability of $A$, the problem has a global attractor.
\end{theorem}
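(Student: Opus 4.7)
The first assertion—local existence, uniqueness, and continuous dependence—is essentially automatic: the vector field $F$ in \eqref{lv} is polynomial, hence $C^\infty$ and locally Lipschitz on $\rr^n$, so classical Picard--Lindelöf delivers a unique maximal solution depending continuously on the initial datum. The only substantive point is invariance of $\overline{C}_+$, which I plan to establish by freezing any trajectory and observing that the $i$-th equation reads $u_i'(t)=u_i(t)\,g_i(t)$ with $g_i(t)=b_i+\sum_j a_{ij}u_j(t)$; its scalar solution $u_i(t)=u_i(0)\exp\bigl(\int_0^t g_i(s)\,ds\bigr)$ preserves the sign of the initial coordinate, so each face $\{u_i=0\}$, and therefore $\overline{C}_+$ itself, is both forward and backward invariant.

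Upgrading local to global solutions and obtaining an absorbing set requires an a priori estimate, and this is where VL-stability enters. My plan is to use the linear functional $V(u)=h^Tu=\sum_i h_iu_i$, where $H=\mathrm{diag}(h_1,\ldots,h_n)$, $h_i>0$, is the matrix furnished by Definition \ref{def1}; on $\overline{C}_+$ this $V$ is equivalent to the Euclidean norm $|u|$. A direct computation along a trajectory in $\overline{C}_+$ gives
$$
\frac{d}{dt}V(u(t))=\sum_i h_ib_iu_i+\sum_{i,j}h_ia_{ij}u_iu_j=h^T\mathrm{diag}(b)\,u+\tfrac{1}{2}u^T(HA+A^TH)u,
$$
and by VL-stability the quadratic term is bounded above by $-\alpha|u|^2$ for some $\alpha>0$. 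This yields a differential inequality $\dot V\le C|u|-\alpha|u|^2\le c_1V-c_2V^2$, from which I expect two consequences: (i) $V$ can grow at most exponentially, ruling out finite-time blow-up, so the maximal interval is $[0,\infty)$ and solutions are global; and (ii) $V(u(t))$ is forced below a universal threshold after a time depending only on the initial bound, providing an absorbing ball $B\subset\overline{C}_+$.

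With global well-posedness in place, $S(t):\overline{C}_+\to\overline{C}_+$ is a continuous semigroup on a finite-dimensional state space, where asymptotic compactness is automatic because bounded sets are precompact; hence the standard existence criterion for global attractors of dissipative semigroups (\cite{ha,Robinson}) applies and produces
$$
\mathcal{A}=\bigcap_{t\ge 0}\overline{\bigcup_{s\ge t}S(s)B}.
$$
The only step that genuinely uses the assumption on $A$ is the Lyapunov computation above; the main subtlety I foresee is simply the choice of test functional—the classical logarithmic Volterra function is singular on the coordinate hyperplanes, and the key observation is that a plain linear functional $V=h^Tu$ already suffices because VL-stability provides exactly the diagonal congruence needed to make the symmetric part of $HA$ negative definite. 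Beyond tracking the constants, no real obstacle arises.
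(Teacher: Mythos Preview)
Your proposal is correct and follows essentially the same route as the paper: both use the weighted linear functional $V(u)=\sum_i h_i u_i$ with the VL-stability weights $h_i$, symmetrize the quadratic term to $\tfrac{1}{2}u^T(HA+A^TH)u\le -\alpha|u|^2$, and obtain an inequality of the form $\dot V\le cV-dV^2$ from which dissipativity and (in finite dimensions) asymptotic compactness are immediate. You are simply more explicit than the paper about the well-posedness preliminaries (invariance of $\overline{C}_+$ via the exponential representation and exclusion of blow-up from the same estimate), which the paper outsources to \cite{LangaSuarez}.
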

\begin{proof}
	The result follows the argument of  \cite{LangaSuarez}. We only need to prove the dissipativity, i.e. the existence of the bounded absorbing set; once we have it, the asymptotic compactness is trivial. To this end it is sufficient to prove that if $\sum_{i=1}^n u_iw_i \geq R$ for $R$ large enough with some fixed weights $w_i>0$, then
	$$
	\frac{d}{dt} \sum_{i=1}^n u_i w_i \leq - D(R). 
	$$
	Indeed defining $|u|$ as $\sum_{i=1}^n u_i w_i$,
	\begin{align*}
	& \frac{d}{dt}  |u| = \frac{d}{dt} \sum_{i=1}^n u_i w_i  = \sum_{i=1}^n u_i b_i w_i + \sum_{i,j=1}^n u_i a_{ij} w_i u_j = \sum_{i=1}^n u_i b_i w_i + \frac{1}{2}\sum_{i,j=1}^n u_i (a_{ij} w_i + a_{ji} w_j) u_j\\
	& \ \ \leq c |u| - d|u|^2.
	\end{align*}
	where $c > 0$ and $d > 0$ are some constants.
	Then if $|u| \geq \frac{2c}{d}$, then the  right-hand side of the last expression is decreasing as a function of $|u|$, and  
	 	\begin{align*}
	 & \frac{d}{dt}  |u| \leq \frac{2c^2}{d} - d\frac{4c^2}{d^2} = -\frac{2c^2}{d}, 
	 \end{align*}
	 which is enough for the global attractor existence.
\end{proof}

%\begin{definition}
%	The semigroup $\{ S(t) \}_{t\geq 0}$ of mappings $S(t): X\to X$ is \color{red}gradient-like \color{black} if there exists a continuous function $L:X\to \mathbb{R}$ such that 
%	\begin{itemize}
%	\item[(i)] $L(S(t)x) \leq L(x) \ \ \textrm{for every}\ \ x\in X\
%	 \ \textrm{and}\ \ t\geq 0.
%	$
%	\item[(ii)] if $L(S(t)x) = L(x) \ \ \textrm{for every}\ \ t\geq 0,$ then $S(t)x=x$ for every $t\geq 0$, i.e. $x$ is an equilibrium. 
%	\end{itemize}
%\end{definition}
%The above definition is valid on any metric space $X$, in our case it will be $\overline{C}_+$. 
%We cite the result of \cite{LangaSuarez}.
%\begin{theorem}[\cite{LangaSuarez}, Theorem 21]\label{langasuarez}
%	The semigroup of mappings $S(t):\overline{C}_+\to \overline{C}_+$ defined by the solutions of \eqref{lv} is \color{red}gradient-like\color{black}. In consequence, the equilibria can be put in order $E = \{ u_0, u_1, \ldots, u_K \}$ such that for every nonconstant trajectory $\gamma$ in the global attractor there exist numbers $i<j$ such that $\lim_{t\to -\infty}\|\gamma(t) - u_i\| = 0$ and  $\lim_{t\to \infty}\|\gamma(t) - u_j\| = 0$. 
%\end{theorem}

\section{Equilibria and the local dynamics}\label{sec:3}
While it is straightforward to find all the equilibria of \eqref{lv} (it suffices to solve $2^n$ linear systems and determine the ones whose solutions are strictly positive, see also \cite{Lischke2017} for an efficient algorithm), finding the connections between them is a harder task. Our aim here is to give an algorithm that can be used to find exactly for which equilibria there exist heteroclinic connections, i.e. the solutions which tend to one equilibrium when time goes to minus infinity and another equilibrium when time goes to plus infinity. Before we move on to the study of the dynamics, we focus in this section on the local behavior in the neighborhood of the equilibria.

\subsection{Linearization and its properties.} We construct the linearized system in the neighborhood of the equilibrium $u^*$ of \eqref{lv}. Let $u^*$ be an equilibrium and denote $v=u-u^*$. Then the system \eqref{lv} can be rewritten as
\begin{equation*}
	 v_i' = \sum_{j=1}^n \frac{\partial F_i(u^*)}{\partial u_j} v_j + G_i(v), 
\end{equation*}
where $G_i(v) = \sum_{j=1}^n a_{ij}v_jv_i$ is the quadratic remainder term. Assume that $u^*$ is  an equilibrium in which the variables are sorted in such a way that $u_i^*\neq 0$ for $i=1,\ldots,k$ and $u_i^*=0$ for $i=k+1,\ldots,n$. Then 
for $i=1,\ldots,k$ the equation of the above system is
$$
v_i' =  \sum_{j=1}^k v_j a_{ij} u_i^*+\sum_{j=k+1}^n v_j a_{ij} u_i^*+G_i(v),
$$
and, for $i=k+1,\ldots,n$,
$$
v_i' =   v_i\left(b_i+\sum_{j=1}^k a_{ij}u_j^*\right)+G_i(v),
$$
The linearized system has the following block diagonal form 
\begin{equation}\label{eqn:linearized}
w' = Bw = \begin{pmatrix}
	B^{11} & B^{12}\\
	0 & B^{22}
\end{pmatrix}w,
\end{equation}
where the matrix $B^{22}$ is diagonal and $B^{22}_{ii} = b_i+\sum_{j=1}^k a_{ij}u_j^*$, while $B^{11}_{ij} = a_{ij}u_i^*$, and $B^{12}_{ij} = a_{ij}u_i^*$. 

We will name the subsets $J \subset \{1,\ldots,n\}$ corresponding to the admissible equilibria as admissible communities, according to the next definition.  
\begin{definition}
	The set (community) $I\subset \{1,\ldots,n\}$ is admissible if there exists the nonnegative equilibrium $u^*=(u^*_1,\ldots,u^*_n)$ of \eqref{lv} with $u^*_i>0$ if and only if $i\in I$. The family of all admissible communities will be denoted by $\mathcal{E} \subset 2^{\{1,\ldots,n\}}$.  The corresponding set of equilibria is denoted by $E = \{u^0,\ldots, u^K\}$.  As $0\in \mathcal{E}$ we always denote $u^0=0.$   
\end{definition}
 Whenever we speak about the multiple equilibria we will denote them by upper indices such as $u^i, u^j$. On the other hand, lower indices will denote coordinates of vectors $u=(u_1,\ldots,u_n)$.

The following lemmas summarize the properties of the matrix of the linearized system. Note that similar observations were made in different context in \cite{Lischke2017}.

\begin{lemma}\label{lem:stab}
	Assume that the matrix $A$ of the system \eqref{lv} is Volterra--Lyapunov stable. Consider the admissible community $I$ and the corresponding equilibrium $u^*$. The system linearized around $u^*$ has the form \eqref{eqn:linearized}. The spectrum of the matrix $B^{11}$ is contained in the open half-plane with the negative real part, i.e. $\sigma(B^{11}) \subset \{  z\in \mathbb{C}\;:\ \mathrm{Re}\, z < 0\}$. 
\end{lemma}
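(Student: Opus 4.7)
The plan is to show that $B^{11}$ is itself Volterra--Lyapunov stable, which immediately forces its spectrum into the open left half-plane. The key observation is that $B^{11}$ factors nicely: writing $U = \mathrm{diag}(u_1^*,\ldots,u_k^*)$ (with $u_i^* > 0$ for $i \in I$, after reordering so that $I = \{1,\ldots,k\}$), the formula $B^{11}_{ij} = a_{ij}u_i^*$ is exactly $B^{11} = U A(I)$, where $A(I)$ is the principal submatrix of $A$ indexed by the admissible community $I$.

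First I would invoke the standard fact (to be cited from the appendix on stable matrices) that every principal submatrix of a VL-stable matrix is again VL-stable. Applied to $A$, this produces a positive diagonal matrix $H' = \mathrm{diag}(h'_1,\ldots,h'_k)$ with $h'_i > 0$ such that $H' A(I) + A(I)^T H'$ is negative definite.

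Next I would exhibit an explicit Lyapunov matrix for $B^{11}$. Set $H'' := H' U^{-1}$, which is diagonal with strictly positive entries because each $h'_i > 0$ and each $u_i^* > 0$. Since all matrices involved are diagonal and hence commute, one computes
\begin{equation*}
H'' B^{11} + (B^{11})^T H'' = H' U^{-1} U A(I) + A(I)^T U \, H' U^{-1} = H' A(I) + A(I)^T H',
\end{equation*}
which is negative definite by the previous step. Hence $B^{11}$ is VL-stable.

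Finally, I would close by recalling the elementary consequence that any VL-stable matrix has spectrum strictly in the left half-plane: if $B^{11} v = \lambda v$ with $v \neq 0$, then $v^{*}(H'' B^{11} + (B^{11})^T H'')v = 2(\mathrm{Re}\,\lambda)\, v^{*} H'' v$, and since $v^{*} H'' v > 0$ while the left-hand side is negative, $\mathrm{Re}\,\lambda < 0$. No step is really an obstacle; the one point requiring care is to confirm that the heredity property (principal submatrices of VL-stable are VL-stable) is available, which is the reason the authors have set up the appendix on stable matrices.
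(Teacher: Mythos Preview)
Your proof is correct and follows essentially the same route as the paper: both invoke the heredity of VL-stability to principal submatrices (Lemma~\ref{lem:minor}), and then argue that multiplying a VL-stable matrix on the left by a positive diagonal matrix yields a stable matrix. The only cosmetic difference is that the paper phrases the second step as ``VL-stable $\Rightarrow$ D-stable'' via Fact~\ref{D-stable} (whose proof is exactly your $H'' = H'U^{-1}$ construction), whereas you inline that computation directly.
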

\begin{proof}
	Denote $u^*=(u_1^*,\ldots,u_n^*)$. The matrix $\{a_{ij}\}_{i,j\in I}$ as a principal submatrix of $A$ is Volterra--Lyapunov stable, cf. \cite[Theorem 1 c]{cross}. Hence is is also $D$-stable by \cite[Lemma 3.2.1]{Takeuchi}. This means that the  product $\textrm{diag}((u_i^*)_{i\in I})(a_{ij})_{i,j\in I}$ is stable. This  product is exactly $B^{11}$.  
\end{proof}

We are in a position to formulate a result of the properties of the linearized system \eqref{eqn:linearized}. 

\begin{lemma}\label{lem:res}
	Let $A$ be Volterra--Lyapunov stable and let $u^*$ be an equilibrium with the admissible community $\{1,\ldots,k\}$. The spectrum of the matrix $B$, denoted by $\sigma(B)$ is given by $\sigma(B) = \sigma(B^{11}) \cup \Lambda$ where $\lambda\in \Lambda$ if and only if  $\lambda = B^{22}_{ii}$ for some $i\in \{k+1,\ldots, n\}$, and  $\sigma(B^{11}) \subset \{ \textrm{Re}\, \lambda < 0\}$. So, if for some $\lambda\in \sigma(B)$ we have $\textrm{Re}\, \lambda \geq 0$, then $\lambda$ is real and $\lambda = B^{22}_{ii}$ for some $i\in \{k+1,\ldots,n\}$. The  eigenvector associated with the eigenvalue $B^{22}_{ii}$ is given by $x=(x_1,\ldots,x_k,0,\ldots,0,1,0,\ldots,0)$, where $1$ is on the position $i$ and $(x_j)_{j=1}^k$ is some vector. 
\end{lemma}

\begin{proof}
	The assertion that $\sigma(B^{11}) \subset \{ \textrm{Re}\, \lambda < 0\}$ follows from Lemma \ref{lem:stab}.  
	
	Now we prove that 
	$x=(x_1,\ldots,x_k,0,\ldots,0,1,0,\ldots,0)$ is the eigenvector associated with eigenvalue $B^{22}_{ii}$. We need to have
	$$
	B^{11}\begin{pmatrix} x_1\\. \\ . \\ . \\ x_k\end{pmatrix} + B^{12} \begin{pmatrix} 0\\. \\ 1 \\ . \\ 0\end{pmatrix} = B^{22}_{ii} \begin{pmatrix} x_1\\. \\ . \\ . \\ x_k\end{pmatrix}.  
	$$
	Such $(x_1,\ldots,x_k)$ can be found because the matrix $B^{11} - B^{22}_{ii} I$ is invertible as $B^{22}_{ii} \geq 0$ and it cannot be the eigenvalue of the matrix $B^{11}$ as all its eigenvalues have negative real part. Moreover
	$$
	0\begin{pmatrix} x_1\\. \\ . \\ . \\ x_k\end{pmatrix} + B^{22} \begin{pmatrix} 0\\. \\ 1 \\ . \\ 0\end{pmatrix} = B^{22}_{ii} \begin{pmatrix} 0\\. \\ 1 \\ . \\ 0\end{pmatrix},  
	$$ 
	holds trivially. Note that the result is also valid if the eigenvalues of $B^{22}$ have multiplicity greater than one.
\end{proof}

\section{Invasion graphs and information structures}\label{sec:4}
The main aim of this section is to propose the algorithm to determine the network of connections between equilibria, i.e. the graph for which the equilibria of the system correspond to the nodes, and the edges correspond to the heteroclinic connections. More specifically, the  vertices are given by the set of admissible communities $\mathcal{E}$ corresponding to the equilibria $E = \{ u^0, u^1, \ldots, u^K \}$ and the edge between two communities $J(u^i) \mapsto J(u^j)$ exists if and only if there exists a solution $\gamma(t)$ which connects $u^i$ with $u^j$ i.e. $\lim_{t\to -\infty}\|\gamma(t) - u^i\| = 0$ and  $\lim_{t\to \infty}\|\gamma(t) - u^j\| = 0$. Such solutions are called the heteroclinic connections. We show in this section that, in the Volterra--Lyapunov stable case, if we assume that all equilibria of the system are hyperbolic, then this graph is exactly the same as the Invasion Graph (IG) as defined by Hofbauer and Schreiber in \cite{Hofbauer}.

\subsection{Invasion rates and invasion graphs.}
Let $I\in \mathcal{E}$, i.e. $I$ is an admissible community of \eqref{lv}. For every species $i \in\{1,\ldots,n\}$, following Chesson \cite{chesson}  we define the invasion rates $r_i(I)$ (see \cite{barabas} for the recent overview of Chesson coexistence theory in which the key role is played by the invasion rates).
\begin{definition}
	Let $I\in \mathcal{E}$  and let $u^*$ be the related equilibrium such that $u^*_i > 0$ for $i\in I$ and $u^*_i = 0$ for $i\notin I$. Then the invasion rate of the species $i$ of the community $I$ is defined as
	$$
	r_i(I) = b_i + \sum_{j\in I}a_{ij}u_j^*. 
	$$
	If $I=\emptyset$ then we use the convention $r_i(\emptyset) = b_i$. 
\end{definition}

We first observe that the invasion rates are always zero for $i\in I$, this is a counterpart of Lemma 1 from \cite{Hofbauer}.
\begin{remark}
	If $i\in I$ then $r_i(I) = 0$. This follows from the fact that $u^*=(u^*(I),0_{i\in\{1,\ldots,n\}\setminus I})$ is an equilibrium, whence $A(I)u^*(I) = -b(I)$.
\end{remark} 
The remaining invasion rates are the eigenvalues of the system linearization at the equilibrium $u^*$. 
\begin{remark}
	If $i\notin I$ then the entries $B^{22}_{ii}$ of the linearization matrix $B$    given in Lemma \ref{lem:res} by $B^{22}_{ii} = b_i + \sum_{j\in I}a_{ij}u_j^*$, are the invasion rates $r_i(I)$.  
\end{remark}

Following \cite{Hofbauer} we present the construction of the Invasion Graph (IG) together with the result that all heteroclinic connections between the equilibria correspond to some edges in this graph. The construction and results in \cite{Hofbauer} are very general: they do not need the minimal invariant sets to be equilibria only, and the case of more general structures is covered too (see \cite{May}, where an example of periodic solutions is given; for such case the invasion rate is defined for an ergodic measure supported by such solution).  We restrict the presentation in this section to the simpler situation where the minimal isolated invariant sets (and thus the supports of the ergodic invariant measures) are only the equilibria of the system. While this is guaranteed to be true in the  case of a Lyapunov--Volterra stable matrix, this assumption is hard to verify in the case of a general $A$. 

We revisit the algorithm for constructing the IG, presented in \cite{Hofbauer}:

\begin{algorithm}\label{IG} The Invasion Graph is constructed in two steps: the first step defines its vertexes, and the second one its edges.

\begin{itemize}
	\item[(Step 1)]  The set of vertexes of the graph is $\mathcal{E}$, i.e., the vertexes are given by all admissible communities.
	
	\item[(Step 2)] The graph contains the edge from $I$ to $J$ (we denote it by $I\to J$) if $I\neq J$, $r_i(I) > 0$ for every $i\in J\setminus I$, and $r_i(J)< 0$ for every $i\in I\setminus J$.	
\end{itemize}
\end{algorithm}

In the graphs that we construct we identify equilibria with the sets of their nonzero variables which define them uniquely.  Hence sometimes we will speak of edges between the equilibria $u^i\to u^j$ and sometimes, equivalently between the sets of natural numbers such as, for example, $I\to J$.
 
The key property of the IG obtained in \cite{Hofbauer} is contained in the next result, cf. \cite[Lemma 2]{Hofbauer}.

\begin{lemma}\label{hof}
	Let $A, b$ be such that  $r_i(I)\neq 0$ for every $I\in \mathcal{E}$ and for every $i\notin I$. Assume that $\gamma(t)$ is the solution of \eqref{lv} with $\lim_{t\to-\infty}\|\gamma(t)-u^j\|=0$ and $\lim_{t\to\infty}\|\gamma(t)-u^k\|=0$, where $u^j,u^k$ are two equilibria of the system. Then, in the invasion graph there exists the edge from $J(u^j)$ to $J(u^k)$.
	\end{lemma}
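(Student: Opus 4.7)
The plan is to verify the three conditions defining an edge $I\to J$ in Algorithm \ref{IG}, where $I := J(u_1^*)$ and $J := J(u_2^*)$.

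First I would dispose of $I\neq J$. Because the principal submatrix $A(I)$ inherits Volterra--Lyapunov stability from $A$ (cf.\ Lemma \ref{lem:stab}), it is invertible, and the positive coordinates of any equilibrium with support exactly $I$ are uniquely determined as the solution of $A(I)v=-b(I)$. Hence $I=J$ would force $u_1^*=u_2^*$; the Lyapunov function of Theorem \ref{langasuarez} would then be constant along $\gamma$, and condition (ii) in the definition of a gradient semigroup would force $\gamma$ itself to be an equilibrium, contradicting the assumption that it is a non-constant heteroclinic.

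The core of the argument is an elementary Lyapunov-exponent estimate along $\gamma$, carried out one coordinate at a time. Fix $i\in J\setminus I$. Because $u_i$ factors out of the $i$th component of the right-hand side of \eqref{lv}, the hyperplane $\{y_i=0\}$ is invariant in both time directions by uniqueness for the initial value problem; combined with $\gamma_i(t)\to u_{2,i}^*>0$ as $t\to+\infty$, this gives $\gamma_i(t)>0$ for every $t\in\rr$. The quantity $\varphi_i(t):=\log\gamma_i(t)$ is therefore well defined, and a direct computation together with $\gamma(t)\to u_1^*$ shows
\[
\varphi_i'(t)=b_i+\sum_{j=1}^n a_{ij}\gamma_j(t)\longrightarrow r_i(I)\qquad\text{as }t\to-\infty.
\]
The hypothesis $r_i(I)\neq 0$ leaves two cases: if $r_i(I)<0$, then $\varphi_i'(t)$ is bounded above by a strictly negative constant for all sufficiently negative $t$, so integration yields $\varphi_i(t)\to+\infty$, i.e.\ $\gamma_i(t)\to+\infty$, contradicting $\gamma_i(t)\to u_{1,i}^*=0$. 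Only $r_i(I)>0$ remains.

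The third condition, $r_i(J)<0$ for $i\in I\setminus J$, I would establish by the completely symmetric forward-time argument: the coordinate $\gamma_i$ stays strictly positive on $\rr$, tends to $u_{1,i}^*>0$ as $t\to-\infty$ and to $u_{2,i}^*=0$ as $t\to+\infty$, and its logarithmic derivative now converges to $r_i(J)$ as $t\to+\infty$; a positive value would blow $\gamma_i$ up forward in time and zero is ruled out by assumption. The only delicate point, which I regard as the main technical ingredient rather than a serious obstacle, is the global positivity of $\gamma_i(t)$ on all of $\rr$; this comes from the product structure of the Lotka--Volterra vector field together with uniqueness for the Cauchy problem, and the rest of the argument is purely calculus and requires no appeal to the local invariant manifold theorems of Section \ref{sec:3}.
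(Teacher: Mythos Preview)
The paper does not prove Lemma \ref{hof}; it is quoted from Hofbauer and Schreiber \cite[Lemma 2]{Hofbauer} without argument. Your logarithmic-derivative computation is correct and is in essence the standard proof: writing $(\log\gamma_i)'=b_i+\sum_j a_{ij}\gamma_j$ and letting $t\to\pm\infty$ is exactly how one reads off the signs of the invasion rates along a heteroclinic in the equilibrium case. In Hofbauer and Schreiber the argument is phrased via time averages (because their $\alpha$- and $\omega$-limits are allowed to be general ergodic measures rather than equilibria), but for equilibria it reduces to precisely what you wrote.

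One small point of hygiene: your treatment of $I\neq J$ appeals to Volterra--Lyapunov stability of $A$ to conclude that $A(I)$ is invertible and hence that the support determines the equilibrium uniquely. That assumption is \emph{not} among the hypotheses of Lemma \ref{hof} as stated here, which requires only a gradient-like structure and nonzero invasion rates. Within this paper the slip is harmless, since VL-stability is always in force where the lemma is applied, but if you want the lemma at the generality in which it is stated you should either add the standing assumption that each principal submatrix $A(I)$ is nonsingular (so that admissible equilibria are in bijection with admissible communities, as the paper in fact tacitly assumes), or simply record that the lemma concerns non-constant heteroclinics between equilibria with distinct supports. The core sign argument is unaffected.
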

	
We define the graph of connections:
	
	\begin{definition}
		The set of vertices of the graph of connections is given by $\mathcal{E}$. The edge $J(u^j)\to J(u^k)$, where $J(u^j), J(u^k)\in \mathcal{E}$ exists in the graph of connections if and only if there exists the solution $\gamma$ of \eqref{lv} such that $\lim_{t\to -\infty} \gamma(t) = u^j$ and $\lim_{t\to \infty} \gamma(t) = u^k$.
	\end{definition} 
 
Finally, following \cite{Hofbauer} we define the Invasion Scheme as the table of the signs of the invasion rates, i.e. 
$$
\mathbb{IS}(i,I) = \textrm{sgn}\, r_i(I)\ \ \textrm{for}\ \ I\subset\mathcal{E}, i\in \{1,\ldots,n\}.
$$
If $i\in I$ then always $\mathbb{IS}(i,I) = 0$. If for some $i\notin I$ we have $\mathbb{IS}(i,I) = 0$ then the equilibrium associated with $I$ is nonhyperbolic. In other cases, we always have $\mathbb{IS}(i,I) = 1$ or $\mathbb{IS}(i,I) = -1$. This matrix is sufficient to construct the IG.\\

\subsection{Finding the connections between equilibria} Lemma \ref{hof} guarantees that the existence of the edge in the IG is the necessary condition for the existence of the connection between equilibria. That is, the graph of connections is the subgraph of the IG. This section is devoted to the proof that this necessary condition is also sufficient for the case of a Volterra--Lyapunov stable matrix $A$.

\begin{theorem}\label{thm:36}
		Let $A$ be a Volterra--Lyapunov stable matrix. Let $u^*$ be an admissible equilibrium which corresponds to the community $I \in \mathcal{E}$. If the set $J \supset I$ is such that for every $j\in J \setminus I$ we have $r_j(I) > 0$ then there exists a solution $\gamma$ of \eqref{lv} such that $\lim_{t\to -\infty}\gamma(t) = u^*$ and $\lim_{t\to \infty}\gamma(t)$ is a GASS for the community $J$.   
\end{theorem}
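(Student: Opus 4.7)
The plan is to work inside the invariant face $F_K := \{u\in\overline{C}_+ : u_i=0\text{ for }i\notin K\}$, where the induced flow is again a Lotka--Volterra system, now governed by the data $(A(K),b(K))$. Since principal submatrices of VL-stable matrices are VL-stable (Lemma~\ref{lem:minor}), Theorem~\ref{thm:take} supplies a unique GASS $u^{**}\in F_K$ for the restricted system, with $J(u^{**})\subseteq K$, attracting every orbit starting in $C_+^{J(u^{**})}$. Because $J(u^{**})\subseteq K$ one has $C_+^K\subseteq C_+^{J(u^{**})}$, so it suffices to exhibit an orbit in $F_K\cap C_+^K$ whose $\alpha$-limit is $u^*$.

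Such an orbit will come from the local unstable manifold of $u^*$ inside $F_K$. Linearizing the restricted system at $u^*$ yields the block form~\eqref{eqn:linearized}: the $I$-block $B^{11}$ is stable by Lemma~\ref{lem:stab} applied to $A(K)$, and the $(K\setminus I)$-block is diagonal with entries $B^{22}_{jj}=r_j(I)>0$ by hypothesis. Hence $u^*$ is hyperbolic in $F_K$, and the Hadamard--Perron theorem (Theorem~\ref{thm:hadamard-perron}) produces a $|K\setminus I|$-dimensional local unstable manifold $W^u_{loc}(u^*)\subset F_K$.

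To pick a point $p_\epsilon\in W^u_{loc}(u^*)\cap C_+^K$ I take $w=\sum_{j\in K\setminus I} e_j^*$, where $e_j^*$ are the unstable eigenvectors described in Lemma~\ref{lem:res}; the $(K\setminus I)$-components of $w$ are then all equal to $1$. The crucial structural observation is that the stable subspace $E_s$ of $B$ is contained in the $I$-coordinate plane (a direct consequence of the upper block-triangular form of $B$), so the correction $\Phi(\epsilon w)\in E_s$ has vanishing $(K\setminus I)$-components. Therefore $p_\epsilon:=u^*+\epsilon w+\Phi(\epsilon w)$ has $I$-coordinates $u^*_i+O(\epsilon)$, which are strictly positive for small $\epsilon>0$, and $(K\setminus I)$-coordinates identically equal to $\epsilon$; hence $p_\epsilon\in F_K\cap C_+^K$.

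The orbit $\gamma(t)=S(t)p_\epsilon$ then remains in $F_K\cap C_+^K$ forward in time, since the multiplicative structure $u_i'=u_i(\cdots)$ in \eqref{lv} preserves strict positivity on each component. Because $C_+^K\subseteq C_+^{J(u^{**})}$, Theorem~\ref{thm:take} applied to the restricted system yields $\gamma(t)\to u^{**}$ as $t\to+\infty$, while $\gamma(t)\to u^*$ as $t\to-\infty$ follows from $p_\epsilon\in W^u_{loc}(u^*)$. I expect the main obstacle to be the construction of $p_\epsilon$: a generic choice of unstable direction would not guarantee positive $(K\setminus I)$-components of $p_\epsilon$, and the argument relies crucially on the explicit eigenvector form of Lemma~\ref{lem:res} and on $E_s$ being \emph{exactly} the $I$-coordinate plane (not merely some complementary subspace), so that $\Phi(\epsilon w)$ cannot reintroduce negative $(K\setminus I)$-components.
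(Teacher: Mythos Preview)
Your proof is correct and follows the same overall strategy as the paper: produce a point on the local unstable manifold of $u^*$ lying in $C_+^K$, then invoke Theorem~\ref{thm:take} to identify the forward limit as the GASS for $K$.

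The execution differs in two small but clarifying respects. First, you restrict at the outset to the invariant face $F_K$, so that the linearization at $u^*$ becomes hyperbolic and the Hadamard--Perron theorem (Theorem~\ref{thm:hadamard-perron}) applies directly; the paper instead works in the ambient system and appeals to the non-hyperbolic unstable manifold theorem (Theorem~\ref{thm:unstable}). Your restriction has the side benefit of guaranteeing automatically that the constructed point has zero coordinates outside $K$, a point the paper's proof leaves implicit. Second, you exploit the structural fact that within $F_K$ the stable subspace $E_s$ is \emph{exactly} the $I$-coordinate plane, so that $\Phi(\epsilon w)$ contributes nothing to the $(K\setminus I)$-coordinates and $(p_\epsilon)_j=\epsilon$ exactly; the paper instead controls these coordinates by the Taylor estimate $u^\varepsilon_j=\varepsilon+C\varepsilon^2$. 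Both variants reach the same conclusion, but yours is slightly sharper and avoids any discussion of center directions.
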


\begin{proof}
	It is enough to show that the unstable manifold of the point $u^*$ in the nonnegative cone intersects the interior of the cone associated with $J$, denoted by $C_+^J$. Then the result follows by Theorem \ref{thm:take}. %Consider the system \eqref{lv} restricted to the variables from $K$ (set $u_i=0$ for $i\notin K$).
	 For the equilibrium $u^*$, by Lemma \ref{lem:res} the local unstable space $E_u$ contains the vector $(y_1,\ldots,y_k, 1_{i\in J\setminus I})$, where the characteristic vector $1_{i\in J\setminus I}$ of $B^{22}$ has coordinates equal to $1$ if $i\in J\setminus I$ and $0$ otherwise. Now by the local unstable manifold theorem, cf. \cite[Theorem 1]{Kelley}, \cite[Theorem 3.2.1]{Guckenheimer}, the manifold $W^u_{loc}(u^*)$, contains points
	$$
	u^\varepsilon = u^* + \varepsilon (y_1,\ldots,y_k, 1_{i\in J\setminus I}) + \Phi (\varepsilon (y_1,\ldots,y_k, 1_{i\in J\setminus I})),
	$$ 
	where $(y_1,\ldots,y_k)$ are given vectors independent of $\varepsilon$, with $\varepsilon>0$ being a sufficiently small number, and $\Phi$ being a smooth function with $\Phi(0)=0$ and $D\Phi(0)=0$. By the Taylor theorem for $j\in J\setminus I$
	$$
	u^\varepsilon_j = \varepsilon + C\varepsilon^2,
	$$
	where $C$ depends on $(y_1,\ldots,y_k)$ and $|C|$ is bounded by a constant depending on the maximum norm of the Hessian of $\Phi$ on the set $U$ which is a neighborhood of zero. Hence, for sufficiently small $\varepsilon > 0$ the local unstable manifold of $u^*$ contains points with all entries in $J\setminus I$ positive. As $u^*$ is positive on coordinates associated with $I$, the proof is complete.
\end{proof}

%New algorithm of IS construction:
The  above result justifies the following algorithm, and we refer to the constructed graph as the Information Structure (IS; for the definition of the Linear Complementary Problem, refer to Section \ref{sec:2}).

\begin{algorithm}[Construction of IS]\label{algo}
In Step 1  for each subcommunity $J$ of $\{ 1,\ldots,n\}$ (including the empty set and the full subcommunity) we construct its GASS. Any trajectory with the initial data having positive entries on the coordinates in $J$ and zeros on coordinates outside $J$ will converge to this GASS.  
\begin{itemize}
	\item[(Step 1)] For all $2^n$ subcomunities in $\{ 1,\ldots, n\}$ find their GASSes by solving $LCP(-A(J),-b(J))$ for every subset $J\subset \{ 1,\ldots, n\}$. For each GASS the procedure returns also the set of its nonzero coordinates. 
In this step we not only construct GASSes for all communities in $\{ 1,\ldots,n\}$, but also find the set $\mathcal{E}$ of all admissible communities.   

	\item[(Step 2)] For every $I\in \mathcal{E}$ denote by $u^*$ the associated equilibrium. Draw outgoing edges from $I$  according to the algorithm below.
	
	\begin{itemize}
		\item[(1)]
		For every $i \in  \{1,\ldots,n\} \setminus I$ calculate the invasion rate 
		$$
		r_i(I) = b_i + \sum_{j\in I} a_{ij}u_j^*.  
		$$
		Take $J$ as the set of those $i \in  \{1,\ldots,n\} \setminus I$ for which $r_i(I) > 0$, i.e.  those species which can successfully invade the equilibrium community $I$. 
		%If for some $i\in   \{1,\ldots,n\}\setminus I$ we have $r_i(u^*) = 0$ then the equilibrium is nonhyperbolic and the algorithm fails.  
		\item[(2)] For every set $K$ such that $I \subsetneq K \subseteq I\cup J $ draw an edge from $I$ to $GASS(K)$.
		\end{itemize}
\end{itemize}
\end{algorithm}

\begin{remark}
	Note that the concept of the IS, as defined by this algorithm, only applies to Volterra-Lyapunov stable systems. This is because it relies on the existence of the GASS, which is characterized as the solution to the Linear Complementarity Problem, cf. Theorem \ref{thm:take}. However, the concept of IS as the skeleton of a global attractor is more general. Indeed, it can be defined as a graph whose vertexes are isolated invariant sets (see \cite{Aragao} for the concept of generalized gradient systems, where the set of connections is between sets more than equilibria) and edges in the IS are the possible connections between them.
\end{remark}

The following example illustrates the algorithm of the IS construction.
\begin{example}\label{ex:2}
	Consider the following system with the Volterra--Lyapunov stable matrix.
	\begin{align*}
		& u_1'=u_1(1.8-u_1+0.24u_2+0.11u_3+0.2u_4),\\
		& u_2'=u_2(-0.45+0.16u_1-u_2+0.05u_3+0.22u_4),\\
		& u_3'=u_3(0.14u_1+0.1u_2-u_3+0.18u_4),\\
		& u_4'=u_4(-0.35+0.11u_1+0.19u_2+0.01u_3-u_4).\\
	\end{align*}
	The IG (which must coincide with the IS and the graph of connections) of the above system is depicted in Fig. \ref{fig:ex2}.
	
	\begin{figure}
		\centering
		\includegraphics[width=0.5\linewidth]{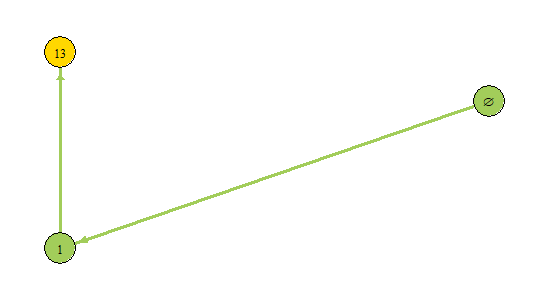}
		\caption{Information Structure for the problem given in Example \ref{ex:2}.}
		\label{fig:ex2}
	\end{figure}
	
	In order to run Algorithm \ref{algo} in the first step we solve the Linear Complementarity Problem for each subcommunity in order to find GASSes. The result is given in the following table:\\
	
	\begin{tabular}{|c|c||c|c||c|c||c|c|}
		\hline
		 Community & GASS & Community & GASS & Community  & GASS & Community & GASS \\
		\hline
		$\emptyset$ & $\emptyset$ & $\{4\}$ & $\emptyset$  & $\{2,3\}$ & $\emptyset$ & $\{1,2,4\}$  & $\{1\}$ \\
		\hline
		 $\{1\}$ & $\{1\}$ & $\{1,2\}$ & $\{1\}$ & $\{2,4\}$ & $\emptyset$ & $\{1,3,4\}$ &  $\{1,3\}$ \\
		\hline
		$\{2\}$ & $\emptyset$ & $\{1,3\}$ &  $\{1,3\}$& $\{3,4\}$ & $\emptyset$ & $\{2,3,4\}$ &  $\emptyset$\\
		\hline
		$\{3\}$ & $\emptyset$ & $\{1,4\}$ & $\{1\}$ & $\{1,2,3\}$ & $\{1,3\}$ & $\{1,2,3,4\}$  & $\{1,3\}$ \\
		\hline
	\end{tabular}
	
	\smallskip
	
Three equilibria were found in the course of computation of all GASSes, namely the equilibria corresponding to the communities $\emptyset, \{1\}, \{1,3\}$. For these communities, in Step 2 we first find those invasion rates by the species not belonging to them which are positive. These are: $r_1(\emptyset)$ and $r_3(\{1\})$. This means we have to draw edges from $\emptyset$ to $GASS(\{1\})$ and from $\{1\}$ to $GASS(\{1,3\})$. These are the two edges depicted in the graph.  
\end{example}

By Theorem  \ref{thm:36} we have the following Corollary
\begin{corollary}\label{corollary:incl}
	Let $A$ be Volterra--Lyapunov stable and let $u^j$ and $u^k$ be the admissible equilibria. If the above algorithm produces the edge from the community $J(u^j)$ to the community $J(u^k)$ then there exists the solution $
	\gamma$ of \eqref{lv} such that $\lim_{t\to -\infty}\gamma(t) = u^j$ and $\lim_{t\to \infty}\gamma(t) = u^k$.    
\end{corollary}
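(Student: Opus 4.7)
The plan is to recognize that this corollary is an almost immediate translation of Theorem \ref{thm:36} into the language of Algorithm \ref{algo}, so the work reduces to unpacking which edges the algorithm actually draws. First, writing $I_1 := J(u_1^*)$ for the support community of $u_1^*$, I would read off Step 2: an edge from $u_1^*$ to $u_2^*$ is produced exactly when there is a community $K$ with $I_1 \subsetneq K \subseteq I_1 \cup \{ i \notin I_1 : r_i(I_1) > 0\}$ and $u_2^* = \mathrm{GASS}(K)$. In particular, such a $K$ strictly contains $I_1$ and every index $i \in K \setminus I_1$ has positive invasion rate $r_i(I_1) > 0$.

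Second, I would apply Theorem \ref{thm:36} to exactly this data, taking the equilibrium $u^*$ of the theorem to be $u_1^*$, its community to be $I_1$, and the enlarged community to be $K$ as above. The hypotheses of the theorem then hold verbatim, so it supplies a trajectory $\gamma$ of \eqref{lv} with $\lim_{t \to -\infty}\gamma(t) = u_1^*$ and $\lim_{t \to \infty}\gamma(t) = \mathrm{GASS}(K) = u_2^*$, which is exactly the claim.

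The only real obstacle here is bookkeeping: one must match the symbols used in the algorithm (where $I$, $J$ and $K$ are each assigned distinct meanings) with those used in Theorem \ref{thm:36}, and confirm that the equilibrium $\mathrm{GASS}(K)$ returned by the recursive call \textit{FindGASS}$(K)$ in Step 1 is precisely the admissible equilibrium $u_2^*$ toward which $\gamma$ is directed. No further analytical step is needed, since the entire dynamical content --- the unstable-manifold construction at $u_1^*$ carrying a direction into $C_+^K$, together with global attraction to the GASS of the receiving subsystem --- is already packaged inside Theorem \ref{thm:36} (and, through it, inside Theorems \ref{thm:unstable} and \ref{thm:take}).
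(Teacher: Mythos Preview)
Your proposal is correct and matches the paper's approach exactly: the paper simply states that the corollary follows ``by Theorem \ref{thm:36}'', and your write-up is just the explicit unpacking of how Step 2 of Algorithm \ref{algo} feeds the required data $(u^*, I, K)$ into that theorem. No additional ideas are needed or used.
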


By the above corollary we can be sure that if the above algorithm produces the edge, then this edge represents the actual connection between the equilibria of the system. It is hence a kind of ``inner approximation'' of the graph of all connections between the equilibria. On the other hand, Lemma \ref{hof} implies that the IG of \cite{Hofbauer} is the ``outer approximation'', because every existing connection is represented in the IG. So, if we are able to prove that every connection present in the IG is also constructed by the above algorithm, we have the following chain of graphs, where each preceding graph is the subgraph of the next one:
	$$
	(\textrm{IS of Algorithm \ \ref{algo}}) \substack{(1)\\ \subset} (\textrm{Graph of connections}) \substack{(2)\\ \subset} (\text{Invasion Graph}) \substack{(3)\\ \subset} (\textrm{IS of Algorithm\  \ref{algo}}),
	$$
	and all three structures must coincide. 
The inclusion (1) follows from Corollary \ref{corollary:incl} and needs $A$ to be Volterra--Lyapunov stable. The inclusion (2) follows from Lemma \ref{hof}, and does not necessarily need the Volterra--Lyapunov stability. We continue by proving (3).

\begin{theorem}\label{thm:incl3}
	Assume the $A$ is Volterra--Lyapunov stable and that $u^i, u^k$ are the two admissible equilibria with the sets of corresponding nonzero coordinates given by $I_1 = J(u^i)$ and $I_2 = J(u^k)$. 
	%Let moreover   $r_j(I_2)\neq 0$ for $j\in I_1\setminus I_2$. 
	Assume that the connection $I_1\mapsto I_2$ exists in IG, that is for every $j\in  I_2\setminus I_1$ we have $r_j(I_1)>0$ and for every $j\in I_1\setminus I_2$ we have $r_j(I_2) < 0$. Then the graph constructed by Algorithm \ref{algo} contains the edge $I_1 \mapsto I_2$.
\end{theorem}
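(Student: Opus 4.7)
The natural candidate is $K = I_1 \cup I_2$, and the plan is to show (a) that this $K$ satisfies the conditions for Step 2(2) of Algorithm \ref{algo} when run on $I_1$, and (b) that $\mathrm{GASS}(K) = u_2^*$, so the algorithm indeed draws the edge $u_1^* \to u_2^*$. Both parts must be extracted from the two IG sign conditions on the invasion rates, together with the VL-stability of the appropriate principal submatrices of $A$.

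The main obstacle I foresee is not the ``upward'' construction but the need to rule out the degenerate case $I_2 \subsetneq I_1$: the IG definition does not enforce a gain of species, so a priori an edge could describe a ``species loss,'' and then $I_1 \subsetneq K$ would fail. I would dispose of this case first, using Theorem \ref{thm:take} applied to the subsystem on $I_1$, whose matrix $A(I_1)$ is VL-stable by Lemma \ref{lem:minor}. That theorem says that $LCP(-A(I_1), -b(I_1))$ has a unique solution, namely the GASS of the subsystem, and since $u_1^*|_{I_1}$ is strictly positive with $A(I_1) u_1^*|_{I_1} + b(I_1) = 0$, it is this solution. But if $I_2 \subsetneq I_1$, a direct check shows that $u_2^*|_{I_1}$ would be a second LCP solution: the residual $(A(I_1) u_2^*|_{I_1} + b(I_1))_j$ vanishes on $j \in I_2$ (because $u_2^*$ is an equilibrium on its own support) and equals $r_j(I_2) < 0$ on $j \in I_1 \setminus I_2$ by the second IG condition, while complementary slackness is automatic. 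This contradicts uniqueness, so $I_2 \setminus I_1 \neq \emptyset$. The inclusion $K \subseteq I_1 \cup J$ is then immediate, since $K \setminus I_1 = I_2 \setminus I_1$ and the first IG condition says $r_j(I_1) > 0$ for each such $j$.

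For part (b), I would run the same LCP argument in the other direction on the subsystem over $K$, whose matrix $A(K)$ is again VL-stable by Lemma \ref{lem:minor}. It suffices to check that $u_2^*|_K$ solves $LCP(-A(K), -b(K))$: the equilibrium identity for $u_2^*$ on its support $I_2 \subseteq K$ makes the residual vanish on $I_2$, while on $K \setminus I_2 = I_1 \setminus I_2$ the residual equals $r_j(I_2) < 0$ by the second IG condition. By Theorem \ref{thm:take} the LCP solution is unique, so $\mathrm{GASS}(K) = u_2^*$, and Step 2(2) of Algorithm \ref{algo} outputs the edge $u_1^* \to u_2^*$. The entire argument is thus a twofold application of Takeuchi's LCP uniqueness --- once on the subsystem over $I_1$ to exclude species loss, and once on the subsystem over $I_1 \cup I_2$ to identify the GASS --- with the IG sign conditions supplying exactly the residual inequalities needed by LCP.
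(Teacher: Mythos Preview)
Your proof is correct and takes a genuinely different route from the paper's. Both arguments set $K = I_1 \cup I_2$ and reduce the task to showing that $u_2^*$ is the GASS of the subsystem on $K$, but the mechanisms differ. The paper argues dynamically: using Lemma \ref{lem:stab} and the hypothesis $r_j(I_2) < 0$ for $j \in I_1 \setminus I_2$, it shows that the Jacobian of the restricted system at $u_2^*$ has spectrum entirely in the open left half-plane, so $u_2^*$ is locally asymptotically stable; if it were not the GASS $u^*$, points arbitrarily close to $u_2^*$ in the interior of $C_+^{K}$ would be attracted to $u^* \neq u_2^*$, contradicting Hadamard--Perron. You instead argue algebraically, verifying directly that $u_2^*|_K$ satisfies the three conditions of $LCP(-A(K),-b(K))$ --- the sign hypothesis on the invasion rates supplying exactly the residual inequalities --- and invoking the LCP-uniqueness part of Theorem \ref{thm:take}.

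Your approach is more elementary in that it avoids the stable manifold theorem entirely, and it has the merit of explicitly disposing of the pure species-loss case $I_2 \subsetneq I_1$ up front; the paper's proof covers this case only implicitly (its spectral argument still forces $u_2^*$ to be the GASS of the $I_1$-subsystem, which would contradict $u_1^*$ being that GASS), and the opening sentence ``Clearly Algorithm \ref{algo} produces the edge\ldots'' tacitly presupposes $I_1 \subsetneq I_1 \cup I_2$. The paper's route, on the other hand, connects more transparently to the linearization machinery of Section \ref{sec:3} and to Remark \ref{rem:hypo}.
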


\begin{proof}
	 %We restrict the whole system to the equations given by the coordinates in $I_1\cup I_2$, i.e. we set $u_i=0$ for $i\notin I_1\cup I_2$.
	 Consider the system restricted to the variables in $I_1\cup I_2$, i.e. set $u_i=0$ for $i\notin I_1\cup I_2$. 
	 Clearly Algorithm \ref{algo} produces the edge from $J(u^i)=I_1$ to the community corresponding to the node $u^*$ which is the GASS for the community $I_1\cup I_2$. We need to prove that this GASS is $u^k$. Suppose that $u^k$ is not the GASS, i.e. $u^*\neq u^k$.  Then in the arbitrary neighbourhood of $u^k$ there exist points (in the interior of the cone $C_+^{I_1\cup I_2}$, strictly positive in the restricted variables) which are attracted to $u^*$. Since the matrix $B^{11}$ at the point $u^k$ is stable by Lemma \ref{lem:stab} and remaining eigenvalues (that of $B^{22}$) are given by $r_j(I_2)< 0$ for $j\in I_1\setminus I_2$ it follows that the spectrum of the Jacobi matrix at $u^k$ satisfies
	$$\sigma(B) \subset  \{\lambda \in \mathbb{C}\,:\ \textrm{Re}\, \lambda < 0 \}.$$
	 In particular, $B$ is hyperbolic and the local stable manifold of $u^k$ is the whole neighborhood of this point. But, since there exists a point in any neighborhood of $u^k$ attracted to $u^* \neq u^k$, the contradiction follows. 
\end{proof}

\begin{corollary}
	Assume that $A$ is Volterra--Lyapunov stable. Then, the IG is a subgraph of the graph of connections. If, additionally, all invasion rates $r_i(J)$ are nonzero for $i\notin J$ for all admissible communities $J\in \mathcal{E}$ (i.e. all equilibria corresponding to admissible communities are hyperbolic), then both graphs coincide.
\end{corollary}

Note that since Algorithm \ref{IG} does not need to find GASSes and solve LCPs, the construction of IG  is the way to find the graph of connections with the lower computational effort.

\begin{remark}
	We can summarize the obtained results as follows. 
	\begin{align*}
		& A\ \textrm{is Volterra--Lyapunov stable}\ \ \Rightarrow\\
		& \ \ \ \ (\text{Invasion Graph}) \substack{(3)\\ \subset} (\textrm{IS of Algorithm \ \ref{algo}}) \substack{(1)\\ \subset} (\textrm{Graph of connections}).\\ \\
		&
		r_i(J)\neq 0\ \textrm{for}\ i\notin J \textrm{(all equilibria are hyperbolic)}\ \Rightarrow\\
		& \ \ \ \ \  (\textrm{Graph of connections}) \substack{(2)\\ \subset} (\text{Invasion Graph}).
	\end{align*}
	In the proof of Theorem \ref{thm:incl3} we have also shown that if $A$ is Volterra--Lyapunov stable then the fact that $\sigma(DF(u^*))$ is hyperbolic (its spectrum does not intersect the imaginary axis) is equivalent to the statement that $r_i(J) \neq 0$ for every $i \notin J$. This fact follows from Lemma \ref{lem:stab}. Note that Theorems \ref{thm:36} and \ref{thm:incl3} remain valid even for nonhyperbolic case, i.e. if for some $j\notin I$ we have $r_j(I) = 0$ (in Theorem \ref{thm:36} we take only those $j\in K\setminus I$ for which $r_j(I) > 0$ so, in the nonhyperbolic case, if $r_j(I) = 0$, the species $j$ will not be considered as the one which may succesfully invade the community $I$, which may lead to omission of existing connections). Hence in the nonhyperbolic case the inclusions (1) and (3) remain valid, but not necessarily the inclusion (2). So without the hyperbolicity assumption, the IG is included in the graph of connections, but not necessarily the otherwise.  
\end{remark}

We remark that every graph that we construct must always represent a  substructure of the global attractor, since all equilibria and their heteroclinic connections belong to it. Moreover, if we assume that the global attractor consists only of the equilibria and their connections, then the constructed structure is exactly the global attractor, which is the case, for example, if the matrix $A$ is symmetric, cf. Section  \ref{symmetric}.  The following example demonstrates that this does not always has to be the case.
\begin{example}
	Consider the following system of three ODEs representing the May--Leonard problem, cf. \cite{chi, May}.
	\begin{align}
		&\nonumber u_1' = u_1(1-u_1-1.5 u_2-0.05u_3), \\
		& u_2'=u_2(1-0.05u_1-u_2-1.5u_3)\label{eq:may},\\
		&\nonumber u_3'=u_3(1-1.5 u_1-0.05u_2-u_3).
	\end{align}
	The matrix of the above system is Volterra--Lyapunov stable, cf. \cite{LVstable}. The system has five equilibria: zero, three one species equilibria and one three species equilibrium. The graph of connections (and, equivalently IG and IS) for the above system is presented in Fig. \ref{fig:may}. The graph does not represent the full dynamics of the system because inside the global attractor there exists a solution which converges forward in time to the three species equilibrium (represented by the node $123$) and backward in time to the heteroclinic cycle connecting the three nodes $1$, $2$, and $3$. 
	
	\begin{figure}
		\centering
		\includegraphics[width=0.5\linewidth]{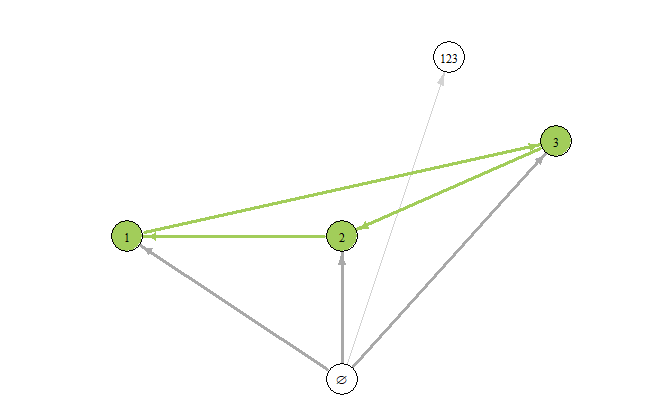}
		\caption{The Invasion Graph for the system \eqref{eq:may}}
		\label{fig:may}
	\end{figure}
	
\end{example}

Notably, in the above example the graph has a $3$-cycle consisting of three heteroclinic connections. We leave open the following question
\begin{Question}
	Assume that the matrix $A$ is Volterra--Lyapunov stable and that all equilibria are hyperbolic. Can we assert that if the IG is acyclic, then the global attractor consists only of the equilibria and their heteroclinic connections, and hence, this graph represents the whole dynamics of the system?
\end{Question}
%
%We provide in the Supplementary Material the \verb|R| code with the Algorithm \ref{algo}.

\section{Structural stability of invasion graphs}\label{sec:5}

%\subsection{On structural stability}
\subsection{Local structural stability}
If the system is Morse--Smale, then it is also structurally stable, i.e. $C^1$ small  perturbation of its vector field produces a system whose global attractor has the same structure (see \cite[Theorem 2]{Bortolan}). In this section we show that, although the system governed by \eqref{lv} is not necessarily Morse--Smale, cf. Example \ref{exmorse}, if all equilibria are hyperbolic, the small perturbation of $A$  and $b$  produces a system with the same graph of connections (and, if the global attractor consists only of the equilibria and their connections, with the same global attractor structure). In the next result $B(A,\varepsilon)$ denotes the euclidean ball in $\mathbb{R}^{n\times n}$ and $B(b,\varepsilon)$ in the euclidean ball in $\mathbb{R}^{n}$. Moreover, denote by $\mathcal{E}(A,b)$ the set of admissible communities for the problem with matrix $A$ and vector $b$. For $I \in \mathcal{E}(A,b)$ and $i\notin I$ we will use the notation $r_i^{A,b}(I)$ to denote the invasion rate corresponding to $A, b$.

\begin{theorem}\label{thm:stability}
	Let $\overline{A}$ be a Volterra--Lyapunov stable matrix and let $\overline{b}\in \mathbb{R}^n$ be such that for all admissible communities $I\in  \mathcal{E}(\overline{A},\overline{b})$ the corresponding equilibria are  hyperbolic.  Then there exists $\varepsilon > 0$ such that for all matrices $A\in B(\overline{A},\varepsilon)$    and all vectors $b\in B(\overline{b},\varepsilon)$ we have $\mathcal{E}(A,b)= \mathcal{E}(\overline{A},\overline{b})$. Moreover for every $I \in \mathcal{E}(\overline{A},\overline{b})$ and every $i\notin I$ we have
	\begin{align*}
	& r^{\overline{A},\overline{b}}_i(I) > 0 \Rightarrow r^{{A},{b}}_i(I) > 0,\\
	& r^{\overline{A},\overline{b}}_i(I) < 0 \Rightarrow r^{{A},{b}}_i(I) < 0.
	\end{align*}
	Hence, the edges in both Invasion Graphs for $\overline{A}, \overline{b}$ and $A, b$ are the same. This implies that  the graphs of connections for the problems with $\overline{A}, \overline{b}$ and ${A}, {b}$ coincide and the problem governed by the matrix $\overline{A}$ and the vector $\overline{b}$ is structurally stable in the class of Volterra--Lyapunov stable matrices.  
\end{theorem}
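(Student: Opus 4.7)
The plan is to verify, for $(A,b)$ in a small enough neighborhood of $(\overline{A},\overline{b})$, that three combinatorial invariants are preserved: the set of admissible communities $\mathcal{E}$, the sign pattern of the invasion rates $r_i(I)$ for $I\in\mathcal{E}$ and $i\notin I$, and, consequently, the Invasion Graph produced by Algorithm \ref{IG}. The coincidence of the IG with the global attractor structure, established in Section \ref{sec:4} under exactly the hypotheses of the theorem, will then close the argument.

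I would open by observing that Volterra--Lyapunov stability is an open condition: keeping the same diagonal $H$ from Definition \ref{def1} as witness for $\overline{A}$, the map $M\mapsto HM+M^TH$ is continuous and the cone of negative definite symmetric matrices is open, so every $A\in B(\overline{A},\varepsilon)$ remains VL-stable for $\varepsilon$ small, and the same applies to every principal submatrix. In particular each $A(I)$ is invertible (cf.\ Fact \ref{D-stable}), and the candidate equilibrium
$$
u^{I}(A,b) := -A(I)^{-1} b(I)\in\mathbb{R}^{|I|}
$$
depends continuously on $(A,b)$; the community $I$ is admissible precisely when all coordinates of $u^I(A,b)$ are strictly positive.

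For the preservation of $\mathcal{E}$, the inclusion $\mathcal{E}(\overline{A},\overline{b})\subset\mathcal{E}(A,b)$ is immediate from the continuity just recorded. The reverse inclusion is the delicate step. Suppose $I\notin\mathcal{E}(\overline{A},\overline{b})$. If some coordinate of $u^I(\overline{A},\overline{b})$ is strictly negative, continuity preserves the sign and $I\notin\mathcal{E}(A,b)$. The borderline alternative is that $u^I(\overline{A},\overline{b})$ has all nonnegative coordinates, some of which vanish; then setting $I':=\{i\in I\,:\, u^I_i(\overline{A},\overline{b})>0\}\subsetneq I$ gives an admissible community for $(\overline{A},\overline{b})$ whose equilibrium agrees with $u^I(\overline{A},\overline{b})$ on $I'$ and is zero elsewhere, and one computes directly that for every $i\in I\setminus I'$
$$
r^{\overline{A},\overline{b}}_i(I') = \overline{b}_i + \sum_{j\in I'}\overline{a}_{ij}u^I_j(\overline{A},\overline{b}) = \overline{b}_i + \sum_{j\in I}\overline{a}_{ij}u^I_j(\overline{A},\overline{b}) = 0,
$$
which by Remark \ref{rem:hypo} contradicts the assumed hyperbolicity of the equilibrium associated with $I'$. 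This rules out the degenerate case and yields $\mathcal{E}(A,b)=\mathcal{E}(\overline{A},\overline{b})$.

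With $\mathcal{E}$ fixed, the invasion rate $r^{A,b}_i(I)=b_i+\sum_{j\in I}a_{ij}u^I_j(A,b)$ is continuous in $(A,b)$ and, by Remark \ref{rem:hypo} together with the hyperbolicity hypothesis, nonzero at $(\overline{A},\overline{b})$, so its sign persists for $\varepsilon$ small. Since Algorithm \ref{IG} reads the edges of the IG exclusively off of $\mathcal{E}$ and this sign pattern, the two Invasion Graphs coincide; by the chain of inclusions collected in the Remark at the end of Section \ref{sec:4}, both global attractor structures then coincide with their common IG, giving the claimed structural stability. I expect the main obstacle to be precisely the borderline step above: ruling out that a non-admissible $I$ admits a partially zero nonnegative solution, and tracking carefully that the resulting zero invasion rate for the sub-community $I'$ is exactly the failure of hyperbolicity excluded by hypothesis.
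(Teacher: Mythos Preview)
Your proposal is correct and follows essentially the same route as the paper's proof: openness of VL-stability, continuity of $u^I(A,b)=-A(I)^{-1}b(I)$ to preserve admissible communities, the same hyperbolicity contradiction via $r_i(I')=0$ to rule out the borderline nonadmissible case, and continuity of the invasion rates to preserve their signs. The only cosmetic difference is that you argue openness of VL-stability by reusing the witness $H$, whereas the paper invokes continuity of eigenvalues; both are standard and equivalent here.
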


\begin{proof}
The fact that all equilibria are hyperbolic means that $r_i(I) \neq 0$ for every $i\notin I$ and every $I\in \mathcal{E}(\overline{A},\overline{b})$. 
	Note that since the eigenvalues depend continuously on the matrix, the set of Volterra--Lyapunov stable matrices is open and hence we can choose $\varepsilon$ such that every $A\in B(\overline{A},\varepsilon)$ is Volterra--Lyapunov stable. Now, the fact that $I\in \mathcal{E}(\overline{A},\overline{b})$ means that $\overline{A}(I)u^*(I) = -\overline{b}(I)$ has a positive solution $u^*(I)$. From that fact that the mapping $(A,b) \mapsto u^*$, which assigns to a nonsingular $k\times k$ matrix $A$ and vector $b\in \mathbb{R}^k$ the solution $u^*$ of the system $Au^* = -b$, which is continuous, we deduce that we can find $\varepsilon > 0$ such that all admissible communities remain admissible.
	
	We prove that a nonadmissible community for $(\overline{A},\overline{b})$ cannot produce an admissible one upon sufficiently small perturbation. Assume that $I \subset \{ 1,\ldots,n \}$ is not admissible. If at least one of the coordinates of the solution of the system $\overline{A}(I)u^*(I) = -\overline{b}(I)$ is negative, then this negativity is preserved upon small perturbation of $(\overline{A},\overline{b})$. If all are nonnegative, but at least one is zero, say $u_j^* = 0$, then
	$$
	\sum_{i\in I, i\neq j}\overline{a_{ki}}u_{i}^* + \overline{b_k} = 0\ \ \textrm{for every}\ \ k\in I.
	$$
	In particular
	$$
	\sum_{i\in I \setminus\{j\}}\overline{a_{ji}}u_{i}^* + \overline{b_j} = 0.
	$$
	Denote by $I_0\subset I$ the (possibly empty) set of coordinates for which entries of $u^*$ are positive. Then $I_0$ corresponds to the admissible community for $(\overline{A},\overline{b})$. The last equality means that $r^{\overline{A},\overline{b}}_j(I_0) = 0$, which contradicts the assumption of hyperbolicity. We have proved that $\mathcal{E}(A,b) = \mathcal{E}(\overline{A},\overline{b})$. 
	
	The invasion rates $r_i(I)$ are the continuous functions of the vector $b$, matrix $A$ and the equilibrium $u^*$ related to the admissible community $I$. This means that if $r_i(I)$ is nonzero for the system governed by $(\overline{A}, \overline{b})$, it remains nonzero, and does not change sign, in a small neighbourhood of $(\overline{A}, \overline{b})$. This completes the proof.
\end{proof}

\subsection{The regions of structural stability.} In this section we fix the matrix $A$ and we study the properties of the sets of vectors $b \in \mathbb{R}^n$  for which the Invasion Graphs remain unchanged.

\begin{theorem}\label{thm:max}
	Let ${A}$ be a Volterra--Lyapunov stable matrix and let $\overline{b}\in \mathbb{R}^n$ be such that for all admissible communities $I\in \mathcal{E}({A},\overline{b})$ the corresponding equilibria are hyperbolic.  Then there exists the unique maximal open neighbourhood $\mathcal{N}$ of $\overline{b}$ in $\mathbb{R}^n$ such that for every $b\in \mathcal{N}$ we have $\mathcal{E}(A,b) = \mathcal{E}(A,\overline{b})$, all admissible equilibria corresponding to $A,b$ are hyperbolic, and the Invasion Graphs coincide.  
\end{theorem}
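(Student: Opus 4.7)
The plan is to exhibit $\mathcal{N}$ explicitly as
\[
\mathcal{N} := \bigl\{\, b \in \mathbb{R}^n :\ \mathcal{E}(A,b) = \mathcal{E}(A,\overline{b})\ \text{and}\ \mathrm{sgn}\, r^{A,b}_i(I) = \mathrm{sgn}\, r^{A,\overline{b}}_i(I)\ \text{for every}\ I \in \mathcal{E}(A,\overline{b})\ \text{and every}\ i \notin I \,\bigr\},
\]
and then to argue that this single set simultaneously contains $\overline{b}$, is open, and is the largest open neighborhood of $\overline{b}$ on which the three properties in the statement hold. Membership of $\overline{b}$ is immediate from the definition.

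First I would verify that the defining conditions of $\mathcal{N}$ encode precisely the three conclusions of the theorem. Equality of the vertex sets of the Invasion Graphs is built in; matching signs of all invasion rates yields matching edges via Algorithm \ref{IG}; and hyperbolicity of every admissible equilibrium follows because the $r^{A,\overline{b}}_i(I)$ are nonzero by hypothesis, so the sign-matching forces $r^{A,b}_i(I) \neq 0$, and Remark \ref{rem:hypo} (which applies because $A$ is Volterra--Lyapunov stable) then delivers hyperbolicity.

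The key step is openness. Fix $b \in \mathcal{N}$. Since all admissible equilibria at $(A,b)$ are hyperbolic by the above observation, the hypotheses of Theorem \ref{thm:stability} hold with $(\overline{A},\overline{b})$ replaced by $(A,b)$. That theorem thus produces $\varepsilon(b) > 0$ such that for every $b' \in B(b,\varepsilon(b))$ we have both $\mathcal{E}(A,b') = \mathcal{E}(A,b)$ and $\mathrm{sgn}\, r^{A,b'}_i(I) = \mathrm{sgn}\, r^{A,b}_i(I)$ for every admissible $I$ and every $i \notin I$. Composing these identities with the ones defining membership of $b$ in $\mathcal{N}$ gives $b' \in \mathcal{N}$, so $B(b,\varepsilon(b)) \subset \mathcal{N}$, proving that $\mathcal{N}$ is open.

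Maximality and uniqueness are then essentially definitional: any open neighborhood $\mathcal{N}'$ of $\overline{b}$ whose points all satisfy the three properties must in particular give rise to matching signs of invasion rates at every point (two Invasion Graphs with a common vertex set have the same edge set if and only if all invasion rate signs coincide, by Algorithm \ref{IG}), so $\mathcal{N}' \subset \mathcal{N}$. The only subtle point is ensuring that Theorem \ref{thm:stability} may legitimately be reapplied with base $(A,b)$ for an arbitrary $b \in \mathcal{N}$; this reduces to checking hyperbolicity at $(A,b)$, and as above this is immediate from the sign-matching condition together with Remark \ref{rem:hypo}. No machinery beyond Theorem \ref{thm:stability} and Remark \ref{rem:hypo} is needed.
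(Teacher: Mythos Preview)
Your approach differs from the paper's: the paper simply observes that the family $\mathfrak{N}(\overline{b})$ of open neighborhoods of $\overline{b}$ satisfying the three properties is nonempty (by Theorem \ref{thm:stability}) and closed under unions (the properties being pointwise), and takes the union of all its members as the unique maximal element --- phrased via the Kuratowski--Zorn lemma, though a direct union already does the job. You instead exhibit $\mathcal{N}$ explicitly as the set of $b$ sharing the invasion scheme of $\overline{b}$, which is more informative and in fact anticipates the convex-cone description the paper establishes only in the theorem following this one.

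There is, however, a gap in your maximality argument. You claim that two Invasion Graphs on a common vertex set have the same edge set \emph{if and only if} all invasion-rate signs coincide, citing Algorithm \ref{IG}. That algorithm gives only the forward direction (same signs $\Rightarrow$ same edges). The reverse --- that identical edge sets force identical signs --- requires showing that every sign $\mathrm{sgn}\,r_i(I)$ is actually witnessed by some edge, and this is not automatic: a priori some $r_i(I)$ might never be ``tested'' by the edge criterion of Step 2 in Algorithm \ref{IG}. In the Volterra--Lyapunov stable setting the claim is true, but it needs an argument: when $r_i(I)>0$, the GASS of the community $I\cup\{i\}$ must contain $i$ in its support (otherwise that GASS would also solve $LCP(-A(I),-b(I))$, and uniqueness would force it to equal $u^*(I)$, contradicting $r_i(I)>0$); Theorem \ref{thm:36} then yields an edge $I\to J$ with $i\in J$, while no such edge can exist when $r_i(I)<0$. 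Once you insert this step, your explicit identification of $\mathcal{N}$ goes through. Alternatively, you can bypass the issue entirely by arguing, as the paper does, that the union of all valid neighborhoods is itself a valid neighborhood and is therefore the unique maximal one.
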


\begin{proof}
	From Theorem \ref{thm:stability} we know that there exists an open neighborhood of $\overline{b}$ such that the properties required by the theorem are satisfied. Let us denote by $\mathfrak{N}(\overline{b})$ the family of all such neighbourhoods. It is nonempty. Then $\mathcal{N}$ is the union of all elements of $\mathfrak{N}(\overline{b})$.
\end{proof}

We continue by proving the lemma on convexity
\begin{lemma}\label{lemma:conv}
	Let $A$ be Volterra--Lyapunov stable. Suppose that $b_1, b_2\in \mathbb{R}^n$ are such that 
	\begin{itemize}
		\item $\mathcal{E}(A,b_1) = \mathcal{E}(A,b_2) = \mathcal{E}$,
		\item for every $I \in \mathcal{E}$ and for every $i \not\in I$ we have $r^{A,b_1}_{i}(I) \neq 0$, $r^{A,b_2}_{i}(I) \neq 0$, and $r^{A,b_1}_{i}(I) > 0 \Leftrightarrow r^{A,b_2}_{i}(I) > 0$.  
	\end{itemize}
	 Then for every $\lambda\in [0,1]$, denoting $b_\lambda = \lambda b_1 + (1-\lambda)b_2$, we have
	 \begin{itemize}
	 	\item $\mathcal{E}(A,b_\lambda) = \mathcal{E}$,
	 	\item for every $I \in \mathcal{E}$ and for every $i \not\in I$ we have $r^{A,b_\lambda}_{i}(I) \neq 0$, and $r^{A,b_1}_{i}(I) > 0 \Leftrightarrow r^{A,b_\lambda}_{i}(I) > 0$.  
	 \end{itemize} 
\end{lemma}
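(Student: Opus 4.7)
The plan is to split the proof into (i) the easy direction $\mathcal{E} \subset \mathcal{E}(A, b_\lambda)$ together with the invasion-rate sign conditions, and (ii) the harder reverse inclusion $\mathcal{E}(A, b_\lambda) \subset \mathcal{E}$.

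For (i), the key observation is that for every admissible community $I$ the principal submatrix $A(I)$ is Volterra--Lyapunov stable (by Lemma \ref{lem:minor}), hence nonsingular, so the equilibrium coordinates $u^*(b)|_I = -A(I)^{-1} b(I)$ and the invasion rates $r^{A,b}_i(I) = b_i + \sum_{j \in I} a_{ij} u^*_j(b)$ are linear functions of $b$. Consequently $u^*(b_\lambda)|_I = \lambda u^*(b_1)|_I + (1-\lambda) u^*(b_2)|_I$ is a convex combination of vectors with strictly positive entries, hence strictly positive, so $I \in \mathcal{E}(A, b_\lambda)$. Similarly $r^{A,b_\lambda}_i(I) = \lambda r^{A,b_1}_i(I) + (1-\lambda) r^{A,b_2}_i(I)$ is a convex combination of nonzero reals of a common sign, hence nonzero with the same sign.

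For (ii) the difficulty is that the set of $b$ making a given $J \subset \{1, \dots, n\}$ \emph{not} admissible is generally not convex, so a direct convex-combination argument fails. The workaround is to descend to the $J$-restricted subsystem (which is Volterra--Lyapunov stable again by Lemma \ref{lem:minor}) and to exploit the following reformulation: $J \in \mathcal{E}(A, b)$ if and only if the support $I^*_J(b) \subset J$ of the GASS of the $J$-subsystem equals $J$ itself. Indeed, if $J$ is admissible then $v := -A(J)^{-1} b(J) > 0$ satisfies the LCP for the $J$-subsystem with trivial complementary slack, so by the uniqueness in Theorem \ref{thm:take} it is the GASS, while the converse is immediate. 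I will therefore prove that the community $I^*_J(b)$ is the same for every $b \in \{b_\lambda : \lambda \in [0,1]\}$.

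Set $I^*_J := I^*_J(b_1) \subset J$. By the LCP characterization and the hyperbolicity hypothesis, $I^*_J \in \mathcal{E}$ and $r^{A,b_1}_i(I^*_J) < 0$ for every $i \in J \setminus I^*_J$. The sign-preservation hypothesis together with $I^*_J \in \mathcal{E}(A, b_2) = \mathcal{E}$ give $r^{A,b_2}_i(I^*_J) < 0$ and $u^*(b_2)|_{I^*_J} > 0$, so $I^*_J$ satisfies the LCP for the $J$-subsystem at $b_2$; by the uniqueness in Theorem \ref{thm:take}, $I^*_J(b_2) = I^*_J$. Applying the linearity in $b$ already used in (i) to $I^*_J$ and the indices $J \setminus I^*_J$, one obtains $r^{A,b_\lambda}_i(I^*_J) < 0$ for every $i \in J \setminus I^*_J$ and $u^*(b_\lambda)|_{I^*_J} > 0$ for every $\lambda \in [0,1]$, so $I^*_J(b_\lambda) = I^*_J$ throughout the segment. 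Finally, if $J \in \mathcal{E}(A, b_\lambda)$ then $J = I^*_J(b_\lambda) = I^*_J = I^*_J(b_1)$, whence $J \in \mathcal{E}$, yielding the desired reverse inclusion. The main obstacle in this scheme is precisely the non-convexity that arises in (ii), resolved by passing to subsystems and invoking the LCP uniqueness from Theorem \ref{thm:take}.
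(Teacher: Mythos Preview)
Your proof is correct and follows essentially the same approach as the paper: linearity of equilibria and invasion rates in $b$ handles the inclusion $\mathcal{E}\subset\mathcal{E}(A,b_\lambda)$ and the sign statement, while for the reverse inclusion both you and the paper restrict to the $J$-subsystem and invoke uniqueness of the LCP solution (Theorem~\ref{thm:take}) to conclude that the GASS support is the same for $b_1$, $b_2$, and $b_\lambda$. Your write-up spells out this GASS-support step more carefully than the paper, which simply asserts that coinciding invasion schemes force the zero/nonzero coordinate patterns of the two LCP solutions to agree.
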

\begin{proof}
	Assume that $I\in \mathcal{E}$. Then  $A(I)u^*_1 = - b_1(I)$ and $A(I)u^*_2 = - b_2(I)$. This means that $A(I)(\lambda u^*_1 + (1-\lambda)u^*_2)= - b_\lambda(I)$ and $I$ is admissible for $b_\lambda$. On the other hand, assume that $I$ is admissible for $b_\lambda$, i.e. $I\in \mathcal{E}(A,b_\lambda)$ but $I\not\in \mathcal{E}$. Restrict the system to those unknowns which correspond to the indices of $I$. The fact that $I$ is admissible for the problem with $b_\lambda$ means that this system has the strictly positive equilibrium, i.e. the solution $v$ of $A(I)v = - b_\lambda(I)$ has all coordinates strictly positive, and by Theorem \ref{thm:take} it has to be the GASS of the system with $A(I), b_\lambda(I)$, and the solution of  $LCP(-A(I),-b_\lambda(I))$. On the other hand, as $I\not\in \mathcal{E}$, the solutions $u_1^*$ and $u_2^*$ of the problems $LCP(-A(I),-b_1(I))$ and $LCP(-A(I),-b_2(I))$, cannot have all coordinates strictly positive, and, because the invasion schemes $\mathbb{IS}$ for $b_1(I)$ and $b_2(I)$ coincide, the indices of zero and nonzero coordinates in both $u_1^*$ and $u_2^*$ are the same. Then $\lambda u_1^* + (1-\lambda)u_2^*$ must solve $LCP(-A(I),-b_\lambda(I))$ and hence it must be a GASS for the problem with $b_\lambda(I)$, a contradiction with the fact that this GASS has all coordinates strictly positive.         
	
	A straightforward calculation shows that for $I\in \mathcal{E}$
	$$
	r^{A,b_\lambda}_{i}(I) = \lambda r^{A,b_1}_{i}(I) + (1-\lambda)r^{A,b_2}_{i}(I),
	$$
	which is sufficient to complete the proof of the Lemma.
\end{proof}

\begin{remark}
	Note that, by Lemma \ref{lem:res} the assumption that all invasion rates $r_i(I)$ are nonzero for $i\notin I$ is equivalent to saying that the admissible equilibrium corresponding to $I$ is hyperbolic.  
\end{remark}

The next theorem states that the maximal neighbourhoods of Theorem \ref{thm:max} are convex cones and they group all points with a given invasion scheme $\mathbb{IS}$, i.e. the given configuration of equilibria and signs of invasion rates.

\begin{theorem}
	The maximal neighbourhood $\mathcal{N}$ of $\overline{b}$ given in Theorem \ref{thm:max} is an open and convex cone.  	Moreover, if for some point $b\in \mathbb{R}^n$ with all admissible equilibria being hyperbolic the invasion schemes for $A,b$ and $A,\overline{b}$ are the same, then $b\in \mathcal{N}$.  
\end{theorem}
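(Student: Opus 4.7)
The plan is to identify the maximal neighbourhood $\mathcal{N}$ with the explicit ``stratum''
\[
\mathcal{N}' = \bigl\{ b\in\mathbb{R}^n : \mathcal{E}(A,b) = \mathcal{E}(A,\overline{b}),\ r^{A,b}_i(I)\neq 0\ \text{and}\ \mathrm{sgn}\,r^{A,b}_i(I) = \mathrm{sgn}\,r^{A,\overline{b}}_i(I)\ \text{for all}\ I\in\mathcal{E}(A,\overline{b}),\ i\notin I\bigr\},
\]
and then to read off both the convex cone structure and the ``moreover'' clause from invariances of the defining conditions. The inclusion $\mathcal{N}\subset\mathcal{N}'$ is immediate, since by Remark \ref{rem:hypo} hyperbolicity of the admissible equilibria is equivalent to nonvanishing of all relevant invasion rates, and by Algorithm \ref{IG} the Invasion Graph is fully encoded by $\mathcal{E}$ together with the signs of $r_i(I)$. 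For the reverse inclusion, I would pick any $b\in\mathcal{N}'$, observe that the pair $(A,b)$ satisfies the hypotheses of Theorem \ref{thm:stability} (with $b$ in the role of $\overline{b}$), and invoke that theorem to obtain an open Euclidean ball around $b$ on which the invasion scheme is preserved; thus $\mathcal{N}'$ is an open neighbourhood of $\overline{b}$ satisfying all the properties listed in Theorem \ref{thm:max}, so the maximality of $\mathcal{N}$ forces $\mathcal{N}'\subset\mathcal{N}$.

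Given this identification, convexity is exactly Lemma \ref{lemma:conv}: for $b_1,b_2\in\mathcal{N}=\mathcal{N}'$ and $\lambda\in[0,1]$ the lemma yields $\lambda b_1 + (1-\lambda)b_2\in\mathcal{N}'$. For the cone property I would fix $b\in\mathcal{N}$ and $\lambda>0$, note that the solution of $A(I)v = -\lambda b(I)$ is $\lambda$ times the solution of $A(I)v = -b(I)$ so the positivity pattern of equilibria is preserved coordinate-wise (hence $\mathcal{E}(A,\lambda b)=\mathcal{E}(A,b)$), and then compute $r^{A,\lambda b}_i(I)=\lambda\, r^{A,b}_i(I)$, which shows the invasion rates keep their signs. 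Therefore $\lambda b\in\mathcal{N}'=\mathcal{N}$. The ``moreover'' statement is then tautological, since a point $b$ with hyperbolic admissible equilibria and the same invasion scheme as $\overline{b}$ satisfies, by definition, precisely the defining conditions of $\mathcal{N}'$.

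The main mild obstacle is the initial identification $\mathcal{N}=\mathcal{N}'$; one must be careful to spell out that ``coincidence of Invasion Graphs together with hyperbolicity'', as used in Theorem \ref{thm:max}, is equivalent to matching invasion schemes $\mathbb{IS}$. Once this translation is made explicit, the theorem becomes a clean consequence of Theorem \ref{thm:stability}, Lemma \ref{lemma:conv}, and the elementary scaling identity $r^{A,\lambda b}_i(I)=\lambda\, r^{A,b}_i(I)$.
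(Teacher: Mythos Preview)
Your proposal is correct and uses the same ingredients as the paper (Theorem \ref{thm:stability}, Lemma \ref{lemma:conv}, and the scaling identity $r^{A,\lambda b}_i(I)=\lambda\,r^{A,b}_i(I)$), so in substance the two proofs coincide.

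The organization differs slightly. The paper first proves the ``moreover'' clause by a tube argument: given $b$ with the same invasion scheme, it applies Lemma \ref{lemma:conv} along the segment $[\overline{b},b]$ and then covers the segment with the small balls supplied by Theorem \ref{thm:stability}; the union is an open neighbourhood of $\overline{b}$ in $\mathfrak{N}(\overline{b})$ containing $b$, hence $b\in\mathcal{N}$. Only afterwards does it invoke Lemma \ref{lemma:conv} again for convexity. Your route is more economical: you apply Theorem \ref{thm:stability} once at every point of $\mathcal{N}'$ to show the whole set is open, which immediately puts $\mathcal{N}'$ in $\mathfrak{N}(\overline{b})$ and gives $\mathcal{N}'\subset\mathcal{N}$ without ever using the segment argument; Lemma \ref{lemma:conv} is then needed only for convexity. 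You also correctly isolate the one delicate point---that ``same Invasion Graph plus hyperbolicity'' must be read as ``same invasion scheme'' for Lemma \ref{lemma:conv} to apply to arbitrary pairs in $\mathcal{N}$---which the paper handles implicitly.
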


\begin{proof}
	We first prove the second assertion. Take $b\in \mathbb{R}^n$ satisfying the assumptions of the theorem. By Lemma \ref{lemma:conv} the same assumptions are satisfied by every $b_\lambda \in \{ \lambda \overline{b}+(1-\lambda)b\,:\ \lambda\in[0,1]\}$. By Theorem \ref{thm:stability} for each $\lambda \in [0,1]$ there exists an open neighborhood of $b_\lambda$ on which the same assumptions also hold. The union of these neighborhoods is an open neighborhood of $\overline{b}$ which must be contained in $\mathcal{N}$ and contains $b$. 

	Now, convexity of $\mathcal{N}$ follows from Lemma \ref{lemma:conv}. To prove that $\mathcal{N}$ is a cone it is sufficient to see that 
$A(I)(u^*) = - b(I) \Rightarrow A(I)(\alpha u^*) = - \alpha b(I)$ and $r_i^{A,\alpha b}(I) = \alpha r_i^{A,b}(I)$.  
\end{proof}

As a consequence of the above results we can represent the space $\mathbb{R}^n$ as a union of finite number of disjoint open convex cones $\mathcal{N}_k$, with each cone corresponding to a given structure of the Invasion Graph, or equivalently, to a given $\mathbb{IS}$. This $\mathbb{IS}$ is the same for every $b$ in the cone. Note that the number of cones is finite as the number of possible invasion schemes is finite and any two vectors $b$ which yield the same scheme must belong to the same cone.  The points of nonhyperbolicity (that is, vectors $b$ where at least one of the admissible equilibria is nonhyperbolic) constitute the residual set $\mathcal{C}$.
$$
\mathbb{R}^n = \sum_{k=1}^L \mathcal{N}_k \cup \mathcal{C}. 
$$
The following statement holds
\begin{align*}
& b\in \mathcal{C} \Leftrightarrow \ \textrm{there exists}\ I\subset \{ 1,\ldots, n\}\ \textrm{and}\ i\notin I,\\
& \qquad \qquad \textrm{such that}\ r_i(I) = 0\ \textrm{and}\ u^*(I) > 0,\ \textrm{where}\ A(I)u^*(I) = -b(I).
\end{align*}
In other words, denoting by $(A(I)^{-1})_{ij} = a_{ij}(I)^{-1}$ the entries of the inverse matrix to $A(I)$.
\begin{align*}
& b\in \mathcal{C} \Leftrightarrow \ \textrm{there exists}\ I\subset \{ 1,\ldots, n\}\ \textrm{and}\ i\notin I,\textrm{such that}\\
& \qquad \qquad \ A(I)^{-1}b(I) < 0 \ \textrm{and}\ b_i -  \sum_{k\in I}\sum_{j\in I}a_{ij} a_{jk}(I)^{-1}b_k = 0.
\end{align*}
This means that
$$
\mathcal{C} \subset \bigcup_{I\subset \{1,\ldots,n\}} \bigcup_{i\in \{1,\ldots,n\}\setminus I} \left\{ b\in \mathbb{R}^n\,:\  b_i -  \sum_{k\in I}\sum_{j\in I}a_{ij} a_{jk}(I)^{-1}b_k = 0 \right\},
$$
i.e. the set of points of nonhyperbolicity is a subset of the union of a finite number of $n-1$ dimensional hyperspaces in $\mathbb{R}^n$. In particular, $\mathcal{C}$ is ``small'' compared to the sets $\mathcal{N}_k$. 

%We can define, as a measure of structural stability of each set $\mathcal{N}_k$, which counts the ''amount'' of parameter vectors $b$ which yield a given attractor structure related with $\mathcal{N}_k$, the relative $n-1$ dimensional measure of its intersection with the unit sphere $S_{n-1} = \{ b\in \mathbb{R}^n\,:\ \|b\|=1 \}$, i.e. 
%$$
%SS(\mathcal{N}_k) = \frac{m_{n-1}(\mathcal{N}_k \cap S_{n-1})}{m_{n-1}(S_{n-1})}. 
%$$ 
%As $\mathcal{C}$ is contained in a finite sum of hypersurfaces, $SS(\mathcal{C}) = 0$, and 
%$$
%\sum_{k=1}^K SS(\mathcal{N}_k) = 1,
%$$
%whence $SS(\mathcal{N}_k)$ is the fraction of the area of the unit sphere of parameters $b$ which correspond to a given $k$-th possible structure of IG.
\section{Appendix A. The dynamical system generated by \eqref{lv} is not Morse--Smale.}
We begin this short section with a definition of a Morse--Smale system. We do not recall here all the necessary concepts: we refer, for example, to \cite[Section 2.1]{Bortolan} for details on all notions presented in this chapter. Note that related definition in \cite{Bortolan} is more general: it allows for existence of periodic orbits. We present its simplified version only for gradient-like systems.  
\begin{definition}
	Let $X$ be a Banach space and let $S(t):X\to X$ for $t\geq 0$ be a $C^1$ reversible semigroup with a global attractor $\mathcal{A}\subset X$. We denote the set of equilibria of $\{S(t)\}_{t\geq 0}$ as ${E}$, i.e. ${E} = \{ u\in X\,:\ S(t)u=u\}$ for every $t\geq 0$. The semigroup is Morse--Smale if 
	\begin{itemize}
		\item The global attractor consists of the equilibria  ${E}$, and nonconstant trajectories $\gamma:\mathbb{R}\to X$ such that $\lim_{t\to -\infty}\gamma(t) = u_1^*$ and $\lim_{t\to \infty}\gamma(t) = u_2^*$ where $u_1^*, u_2^* \in {E}$. 
		\item The set ${E}$ is finite and all equilibria in ${E}$  are hyperbolic.
		\item If $z\in \mathcal{A}$ is a nonequilibrium point such that $\lim_{t\to -\infty}S(t)z = u_1^*$ and $\lim_{t\to \infty}S(t)z = u_2^*$, then the unstable manifold of $u_1^*$ and stable manifold of $u_2^*$ intersect  transversally at every point $z$ of intersection, that is the sum of their tangent spaces at $z$ span the whole space $X$: $T_z(W^u(u_1^*)) + T_z(W^s(u_2^*)) = X$.   
	\end{itemize}
\end{definition}
The Lotka--Volterra system \eqref{lv} is defined on the closed positive cone $\overline{C}_+$. If a dynamical system is defined on a manifold $M$, then the transversality condition has the form $T_z(W^u(u_1^*)) + T_z(W^s(u_2^*)) = T_zM$, i.e. the tangent spaces of the stable and unstable manifolds span the tangent space of the whole $M$. It is possible to generalize the concept of Morse--Smale semigroups to manifolds with boundary, cf. \cite{LabarcaPacifico, PrishlyakBilunPrus, CRobinson}. However, instead of taking this path, we extend the system to the whole $\R^n$ and we provide a simple example that the resultant system is not necessarily Morse--Smale. Hence, while the structural stability results are known to hold for Morse--Smale systems, cf., for example, \cite{Bortolan}, our Theorem \ref{thm:stability} is a structural stability result (in positive cone) beyond this class.

%We  show that the Lotka--Volterra system with Volterra--Lyapunov stable matrix, and nonzero invasion rates $r_i(I)$ for $I\in \mathcal{E}$ and $i\notin I $ although satisfies first two items of the above definition, does not have to satisfy the third item and hence can be non-Morse--Smale.  
%\begin{remark}
%	The semiflow  defined by the solutions of \eqref{lv} is not defined on a Banach space, only in the nonnegative cone $\overline{C}_+$ of $\mathbb{R}^n$. It can be, however, extended to whole $\mathbb{R}^n$ and the next example, demonstrating that the system does not have to be Morse--Smale remains valid. 
%\end{remark}
\begin{example}\label{exmorse}
	\begin{figure}[h!]
		\centering
		\includegraphics[width=0.4\linewidth]{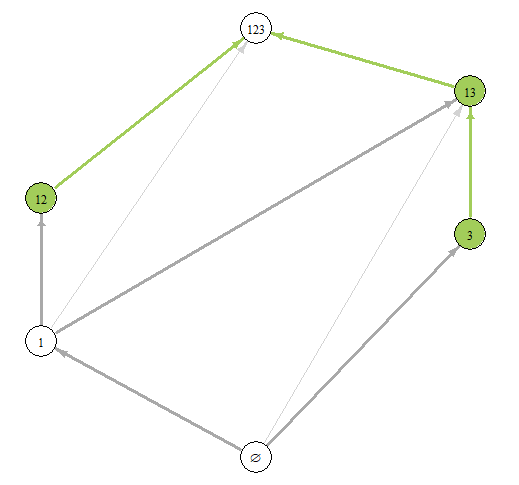}
		\caption{Invasion graph for the example given by \eqref{exmorse} for an ecological community with three species.  The GASS represents a feasible community (all species are present), $u^* = (0.2633778, 0.1695335, 0.377100)$. While the connections $\{3\}\to \{1,3\}$ and $\{1,3\}\to \{1,2,3\}$ are present in the graph, there is no connection $\{3\}\to \{1,2,3\}$}
		\label{fig:Fig_App}
	\end{figure}
	Consider the system
	\begin{equation}\label{eq:example}
		\begin{aligned}
			u_1' = u_1(-u_1 + 0.08u_2-0.47u_3 + 0.43),\\
			u_2'= u_2(0.66u_1-u_2+0.12u_3-0.05),\\
			u_3' = u_3(0.56u_1-0.28u_2-u_3+0.28).
		\end{aligned}
	\end{equation}
	Two of its admissible communities are $\{ 3 \}$ and $\{ 1,3\}$ with the corresponding equilibria $u_1^* = (0,0,0.28)$ and $u_2^*= (0.2362255,0,0.4122863)$. The matrix $A$, as it is diagonally dominant, is Volterra--Lyapunov stable. The graph of connections is presented in Fig. \ref{fig:Fig_App}, and contains the connection $\{3\}\to \{1,3\}$.  Both the unstable manifold of $\{3\}$ and the stable manifold of $\{1,3\}$ are contained in the $\{1,3\}$ plane.
	In fact $W^u(\{3\}) \subset W^s(\{1,3\})$, with $W^s(\{1,3\})$ being two-dimensional, and hence the sum of their tangent spaces cannot span the whole space.

	Also, note that if we set $u_2=0$ and restrict the system to $u_1, u_3$ variables only, for the resulting two dimensional system 
		\begin{equation}\label{eq:example2}
		\begin{aligned}
			u_1' = u_1(-u_1 -0.47u_3 + 0.43),\\
			u_3' = u_3(0.56u_1-u_3+0.28),
		\end{aligned}
	\end{equation}
the same intersection, which was non-transversal in the 3D problem, becomes transversal, and the system is now Morse--Smale. 
\end{example}

\section{Appendix B. Questions and open problems}
\subsection{Wider classes of stable matrices}The first question that we pose is  related with the fact that in \cite{rohr} the authors conjecture about stability of admissible equilibria for more general class of matrices $A$ - D-stable ones and stable ones. As our result on the graph of connections relies on Theorem \ref{thm:take} which uses the logarithmic Lyapunov function valid only in the class of Volterra--Lyapunov stable matrices $A$, it remains open to see if is holds in those wider classes.   
\begin{Question}
	Characterize the class of matrices for which the Invasion Graph (IG) corresponds to the graph of connections. Is this class essentially bigger then Volterra--Lyapunov stable ones? How does it relate to weaker notions of stable matrices such as, for instance, D-stable ones?
\end{Question}

\subsection{Symmetric case}\label{symmetric}

If the matrix $A$ is symmetric then the following function, as proposed by MacArthur \cite{Mac}, is Lyapunov:
$$
V(u) = -\sum_{i=1}^n b_i u_i  - \frac{1}{2}\sum_{i,j=1}^n a_{ij}u_i u_j.
$$
Indeed, after calculations we get,
\begin{equation}\label{lyapunov}
\frac{d (V(u(t)))}{dt} = V'(u) u'(t) = -\sum_{i=1}^n\left(b_i  + \sum_{j=1}^n a_{ij}u_j\right)^2 u_i.
\end{equation}

If we assume that $A$, together with all its principal minors are nonsingular, then the problem \eqref{lv} has the finite number of admissible equilibria which can all be explicitly calculated and one can construct the IG with the vertexes being exactly the equilibria. So for the case of symmetric and stable matrix $A$ (for symmetric matrices stability and Volterra--Lyapunov stability are the same), the IG exactly corresponds to the structure of the global attractor. Again, the open question appears.
\begin{Question}
	Does the IG correspond to the structure of a global attractor for a not necessarily  stable but symmetric matrix $A$ which, together with its all principal minors, is nonsingular? 
	\end{Question}

While we do not know how to answer this question we show that for symmetric case the Lyapunov function $V$ drops along every edge in the IG. While this fact does not guarantee the existence of the connection between the equilibria, this shows that the criterium associated with the Lyapunov function $V$ cannot exclude the edges in IG. 

\begin{lemma}
	Let $A$ be symmetric such that together with all its principal minors it is nonsingular and let $u^1$ and $u^2$ be admissible equilibria of \eqref{lv} which correspond to the communities $I_1, I_2$. If there exists an edge $I_1\to I_2$ in IG then $V(u^1) > V(u^2)$.  
\end{lemma}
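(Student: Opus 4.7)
The plan is to expand $V(u_1)-V(u_2)$ into a sum indexed by the symmetric difference $I_1\triangle I_2$, each term of which is a product of a strictly positive abundance with an invasion rate whose sign is prescribed by the IG edge condition.

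First, setting $v=u_2-u_1$ and using the symmetry of $A$, a direct Taylor expansion of the quadratic $V$ around $u_1$ gives
\[
V(u_2)-V(u_1) = -v^T(b+Au_1)-\tfrac{1}{2} v^T A v,
\]
and expanding around $u_2$ gives
\[
V(u_1)-V(u_2) = v^T(b+Au_2)-\tfrac{1}{2} v^T A v.
\]
Adding these two identities, the quadratic term $\frac{1}{2}v^TAv$ cancels and we obtain
\[
V(u_1)-V(u_2)=\tfrac{1}{2}\bigl[v^T(b+Au_2)+v^T(b+Au_1)\bigr].
\]

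Next, I would rewrite each bracketed term using the invasion rates. Since $u_k$ is the equilibrium associated with $I_k$, one has $(b+Au_k)_i=0$ for $i\in I_k$ and $(b+Au_k)_i=r_i(I_k)$ for $i\notin I_k$. Combining this with the sign pattern $v_i=(u_2)_i>0$ for $i\in I_2\setminus I_1$, $v_i=-(u_1)_i<0$ for $i\in I_1\setminus I_2$, and $v_i=0$ for $i\notin I_1\cup I_2$, I get
\[
v^T(b+Au_1)=\sum_{i\in I_2\setminus I_1}(u_2)_i\,r_i(I_1), \qquad v^T(b+Au_2)=-\sum_{i\in I_1\setminus I_2}(u_1)_i\,r_i(I_2).
\]

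The last step is to invoke the IG edge condition $I_1\to I_2$: by Algorithm \ref{IG}, $r_i(I_1)>0$ for every $i\in I_2\setminus I_1$ and $r_i(I_2)<0$ for every $i\in I_1\setminus I_2$. Hence both sums above are nonnegative, each nonzero term being strictly positive (since $(u_k)_i>0$ for $i\in I_k$). Because $I_1\neq I_2$, at least one of the sets $I_1\setminus I_2$ and $I_2\setminus I_1$ is nonempty, so at least one sum is strictly positive, giving
\[
V(u_1)-V(u_2)=\tfrac{1}{2}\!\left[\sum_{i\in I_2\setminus I_1}(u_2)_i\,r_i(I_1) -\sum_{i\in I_1\setminus I_2}(u_1)_i\,r_i(I_2)\right]>0.
\]

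There is no genuine obstacle here; the only point to be careful about is the bookkeeping: symmetry of $A$ is precisely what allows the indefinite quadratic term $\frac{1}{2}v^T Av$ to cancel when averaging the expansions around the two equilibria, leaving a fully signed linear combination of invasion rates. The nonsingularity hypothesis on principal minors is not needed for the inequality itself (it only ensures uniqueness of the candidate equilibrium on each community $I\subset\{1,\ldots,n\}$, so that $u_1$ and $u_2$ are well-defined).
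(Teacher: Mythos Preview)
Your argument is correct. The only wording slip is ``adding these two identities'': what you actually do is negate the first identity (so both sides read $V(u_1)-V(u_2)$) and then average; the computation and conclusion are right.

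Your route differs from the paper's. The paper first reduces to $I_1\cup I_2=\{1,\ldots,n\}$, block-decomposes $A$ according to $I_1\setminus I_2$, $I_1\cap I_2$, $I_2\setminus I_1$, writes out the equilibrium equations and the invasion-rate inequalities in those coordinates, and after several matrix manipulations obtains $-u_2^\top b<-u_1^\top b$, concluding via the identity $V(u^*)=-\tfrac{1}{2}(u^*)^\top b$ valid at equilibria. Your approach bypasses all of this: by averaging the two Taylor expansions of the quadratic $V$ you kill the indefinite term $\tfrac{1}{2}v^\top Av$ in one stroke and land directly on the signed formula
\[
V(u_1)-V(u_2)=\tfrac12\Bigl[\sum_{i\in I_2\setminus I_1}(u_2)_i\,r_i(I_1)\;-\;\sum_{i\in I_1\setminus I_2}(u_1)_i\,r_i(I_2)\Bigr],
\]
which is more transparent and makes the role of the IG edge condition explicit. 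The paper's block computation is essentially a coordinate verification of the same cancellation, but your polarization trick is shorter and requires neither the reduction to $I_1\cup I_2$ nor the equilibrium identity for $V$.
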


\begin{proof}
We may assume without loss of generality that $I_1\cup I_2 = \mathbb{R}^n$. Otherwise we remove from the system the equations which correspond to the variables outside $I_1\cup I_2$. We represent  $\mathbb{R}^n = \mathbb{R}^{I_1\setminus I_2} \cup \mathbb{R}^{I_1\cap I_2}\cup \mathbb{R}^{I_2\setminus I_1}$, and we denote the projections on three subspaces as $\Pi_1, \Pi_2, \Pi_3$. Then, the matrix $A$ of the system can be written as
$$
A = \begin{pmatrix} B & C & D\\
	C^\top & E & F\\
	D^\top & F^\top & G
	\end{pmatrix}.
$$   
Now as $u_1$ is equilibrium related with $I_1$, hence $\Pi_3u^1 = 0$, $B\Pi_1u^1+C\Pi_2u^1 = -\Pi_1b$, and $C^\top \Pi_1u^1+E\Pi_2 u^1 = -\Pi_2 b$. Moreover, as the invasion rates at $u^1$ must be positive, it follows that $D^\top \Pi_1u^1 +F^\top\Pi_2u^1 > -\Pi_3 b$. 
Similar analysis at $u^2$ yields $\Pi_1u^2 = 0$, $E\Pi_2 u^2 + F\Pi_3 u^2 = -\Pi_2 b$, and $F^\top\Pi_2 u^2 + G\Pi_3 u^2 = -\Pi_3 b$. Finally as invasion rates at $u^2$ are negative we have  $C\Pi_2 u^2 + D \Pi_3 u^2 < -\Pi_1 b$. 
It follows that 
\begin{align*}
& (\Pi_3 u^2)^\top D^\top \Pi_1u^1 +(\Pi_3 u^2)^\top F^\top\Pi_2u^1 > -(\Pi_3 u^2)^\top \Pi_3 b,\\
& (\Pi_1 u^1)^\top C\Pi_2 u^2 + (\Pi_1 u^1)^\top D \Pi_3 u^2 < -(\Pi_1 u^1)^\top \Pi_1 b.
\end{align*}
Combining the two above inequalities we deduce
$$
(\Pi_1 u^1)^\top C\Pi_2 u^2 + (\Pi_1 u^1)^\top \Pi_1 b < (\Pi_3 u^2)^\top F^\top\Pi_2u^1 + (\Pi_3 u^2)^\top \Pi_3 b.
$$
But 
\begin{align*}
& (\Pi_2u^2)^\top C^\top \Pi_1u^1+(\Pi_2u^2)^\top E\Pi_2 u^1 = -(\Pi_2u^2)^\top \Pi_2 b,\\
& (\Pi_2u^1)^\top E\Pi_2 u^2 + (\Pi_2u^1)^\top F\Pi_3 u^2 = -(\Pi_2u^1)^\top \Pi_2 b.
\end{align*}
Hence
$$
-(\Pi_2u^2)^\top \Pi_2 b  - (\Pi_2u^2)^\top E\Pi_2 u^1 + (\Pi_1 u^1)^\top \Pi_1 b < -(\Pi_2u^1)^\top \Pi_2 b - (\Pi_2u^1)^\top E\Pi_2 u^2 + (\Pi_3 u^2)^\top \Pi_3 b.
$$
As $E$ is symmetric this means that 
$$
- (u^2)^\top b < - (u^1)^\top b,
$$
which exactly implies the assertion as at equilibrium $V(u) = -\frac{1}{2}u^\top b$.  
\end{proof}

\section*{Acknowledgements}
The authors have no competing interests to declare that are relevant to the content of this article. We thank two anonymous referees for their valuable remarks which largely contributed to the final shape of the manuscript.  The work of JAL, PA, and FST  was partially supported by Proyectos Fondo Europeo de Desarrollo Regional (FEDER) and Consejer\'{\i}a de Econom\'{\i}a, Conocimiento, Empresas y Universidad de la Junta de Andaluc\'{\i}a, by Programa Operativo FEDER 2014-2020 reference P20-00592. The work of JAL was also partially supported by  FEDER Ministerio de Econom\'{\i}a, Industria y Competitividad grant PGC2018-096540-B-I00, and Proyectos Fondo Europeo de Desarrollo Regional (FEDER) and Consejer\'{\i}a de Econom\'{\i}a, Conocimiento, Empresas y Universidad de la Junta de Andaluc\'{\i}a, by Programa Operativo FEDER 2014-2020 reference US-1254251. The work of JAL and PK was supported by  Ministerio de Ciencia e Innovaci\'{o}n of Kingdom of Spain under project No. PID2021-122991NB-C21. The work of PK was also supported by Polish National 
Agency for Academic Exchange (NAWA) under Bekker Programme, project number PPN/BEK/2020/1/00265, as well as by National Science Centre (NCN) of Poland under Projects No. DEC-2017/25/B/ST1/00302 and UMO-2016/22/A/ST1/00077.


\begin{thebibliography}{99}

\bibitem{Afraimovich2008} V. Afraimovich, I. Tristan,  R. Huerta, and M.I. Rabinovich, Winnerless competition principle and prediction of the transient dynamics in a Lotka--Volterra model. Chaos: An Interdisciplinary Journal of Nonlinear Science 18(4) (2008) 043103.

\bibitem{Aragao}
E.R. Arag\~ao-Costa, T. Caraballo, A.N. Carvalho, \& J.A. Langa,  Stability of gradient semigroups under perturbation, { Nonlinearity} {24} (2011) 2099--2117.

\bibitem{ArrowMcManus} K.J. Arrow and M. McManus, A note on dynamic stability, Econometrica 26 (1958) 448--454.

\bibitem{barabas} G.E. Barab\'{a}s, R. D'Andrea, and S.M. Stump, Chesson's coexistence theory, Ecological Monographs 88 (2018) 277--303.

\bibitem{Bascompte2014} J. Bascompte and P. Jordano,  \textit{Mutualistic Networks}. Monographs in Population Biology, Princeton University Press, Princeton, NJ, 2014.

\bibitem{Bortolan} M.C. Bortolan, A.N. Carvalho, J.A. Langa, and G. Raugel, Nonautonomous Perturbations of Morse–Smale Semigroups: Stability of the Phase Diagram, J. Dyn. Diff. Equat. 34 (2022) 2681--2747,

\bibitem{chesson} P. Chesson, Multispecies competition in variable environments, Theoretical Population Biology 45 (1994) 227--276.
	
\bibitem{chi} C.-W. Chi, S.-B. Hsu, and L.-I. Wu, On the asymmetric May--Leonard model of three competing species, SIAM J. Appl. Math. 58 (1998) 211-226. 
	
\bibitem{cross} G.W. Cross, Three Types of Matrix Stability, Linear Algebra and its Applications 20 (1978) 253--263. 

\bibitem{esteban} F.J. Esteban, J.A. Galadí, J.A. Langa, J.R. Portillo and F. Soler-Toscano, Informational structures: A dynamical system approach for integrated information. PLoS Computational Biology 14 (9) (2018) e1006154.

\bibitem{Guckenheimer} J. Guckenheimer and P. Holmes, Nonlinear oscillations, dynamical systems, and bifurcations of vector fields, Springer, Applied Mathematical Sciences 42, 2013.  
	
\bibitem{LangaSuarez} G. Guerrero, J.A. Langa, and A. Suarez, Architecture of attractor determines dynamics on mutualistic	complex networks, Nonlinear Analysis: Real World Applications 34 (2017) 17--40.

\bibitem{ha} J.K. Hale \textit{Asymptotic Behavior of Dissipative Systems}. Math. Surveys and Monographs, Amer. Math. Soc., Providence, 1988.

\bibitem{Hang1993} L. Hang-Kwang and S.L. Pimm, The Assembly of Ecological Communities: A Minimalist Approach. The Journal of Animal Ecology 62(4) (1993) 749--765.

\bibitem{Hastings2018} A. Hastings, K.C. Abbott, K. Cuddington, T.B. Francis, G. Gellner, Y.-C. Lai, A.  Morozov, S. Petrovskii, K. Scranton, and M.L. Zeeman,  Transient phenomena in ecology, Science 361(6406) (2018):eaat6412. 

\bibitem{Hastings2021} A. Hastings, K.C. Abbott,  K. Cuddington, T.B. Francis, Y.-C. Lai, A. Morozov, S.  Petrovskii, and M.L. Zeeman, Effects of stochasticity on the length and behaviour of ecological transients, Journal of The Royal Society Interface 18(180) (2021) 20210257. 

\bibitem{Hofbauer_Sigmund_1988} J. Hofbauer, K. Sigmund, \textit{The Theory of Evolution and Dynamical Systems}, London Mathematical Society Student Texts 7, Cambridge University Press, Cambridge, 1988. 

\bibitem{Hofbauer_Sigmund_1998} J. Hofbauer, K. Sigmund, \textit{Evolutionary Games and Population Dynamics}, Cambridge University Press, Cambridge, 1998. 

\bibitem{Hofbauer} J. Hofbauer and S.J. Schreiber, Permanence via invasion graphs: Incorporating community assembly into Modern Coexistence Theory, J. Math. Biol. 85 (2022) 54.

\bibitem{Johnson} C.R. Johnson, Hadamard Products of Matrices, Linear and Multilinear Algebra 1 (1974) 295--307. 

\bibitem{kalita} P. Kalita, J.A. Langa, and F. Soler-Toscano, Informational structures and informational fields as a prototype for the description of postulates of the integrated information theory, Entropy 21(5) (2019) 493.



\bibitem{Kelley} A. Kelley, The Stable, Center-Stable, Center, Center-Unstable, Unstable Manifolds, Journal of Differential Equations 3 (1967) 546--570.  

\bibitem{LVstable} J.F.B.M. Kraaijevanger, A characterization of Lyapunov diagonal stability using Hadamard products, Linear Algebra and its Applications
 151 (1991) 245-254.

\bibitem{LabarcaPacifico} M. Labarca and M.J. Pacifico, Stability of Morse--Smale vector fields on manifolds with boundary, Topology 29 (1990) 57--81.


\bibitem{Lischke2017} H. Lischke and T. Löffler, Finding all multiple stable fixpoints of n-species Lotka--Volterra competition models, Theoretical Population Biology 115 (2017) 24--34. 

\bibitem{Logofet_1993} D.O. Logofet, \textit{Matrices and Graphs: Stability Problems in Mathematical Ecology}, CRC Press, Boca Raton, 1993. 

\bibitem{Mac} R.H. MacArthur,  Species packing and what interspecies competition minimizes, Proc. Nat. Acad. Sci. USA 64 (1969) 1369--1375.

\bibitem{May1973} R.M. May, \textit{Stability and Complexity in Model Ecosystems}, Monographs in Population Biology, Princeton University Press, Princeton, NJ, (1973). 

\bibitem{May} R.M. May and W.J. Leonard, Nonlinear aspects of competition between three species, SIAM J. Appl. Math. 29 (1975) 243--253. 

\bibitem{Molyan} P.J. Moylan, Matrices with positive principal minors, Linear Algebra and its Applications 17 (1977) 53--58. 

\bibitem{Morton1996} R. D. Morton, R. Law, S.L. Pimm, and J.A. Drake, On Models for Assembling Ecological Communities, Oikos 75(3) (1996) 493. 

\bibitem{novak} M. Novak, J. D. Yeakel, A E. Noble, D. F. Doak, M. Emmerson, J. A. Estes, U. Jacob, M. T. Tinker, J. T. Wootton. Characterizing Species Interactions to Understand Press Perturbations: What Is the Community Matrix? Annual Review of Ecology, Evolution, and Systematics 47(1), 409--432 (2016)

\bibitem{portillo} J.R. Portillo, F. Soler-Toscano, and J.A. Langa, Global structural stability and the role of cooperation in mutualistic systems, Plos One 17(4) (2022) e0267404.

\bibitem{PrishlyakBilunPrus} A.O. Prishlyak, S.V. Bilun, A.A. Prus, Morse flows with fixed points on the boundary of 3-manifolds, Journal of Mathematical Sciences 274 (2023), 881--897.

\bibitem{CRobinson} C. Robinson, Structural stability on manifolds with boundary, Journal of Differential Equations 37 (1980) 1--11. 

\bibitem{Robinson} J.C. Robinson, Infinite-Dimensional Dynamical Systems, Cambridge University Press, Cambridge, UK, 2001.

\bibitem{rohr} R.P. Rohr, S. Saavedra, and J. Bascompte, On the structural stability of mutualistic systems, Science 345 (2014) 1253497. 

\bibitem{servan} C.A. Serván and S. Allesina, Tractable models of ecological assembly, Ecology Letters 24 (2021) 1029--1037. 

\bibitem{Takeuchi} Y. Takeuchi, \textit{Global Dynamical Properties of Lotka--Volterra Systems}, World Scientific Publishing Co. Pte. Ltd., Singapore, 1996. 

\bibitem{Zeeman1988} E.C. Zeeman, Stability of dynamical systems. Nonlinearity, 1(1) (1988) 115--155. 
\end{thebibliography}
\end{document}